\pgfplotsset{compat=1.13}
\newtheorem{assumption}{Assumption}
\setlist[enumerate]{leftmargin=.5in}
\setlist[itemize]{leftmargin=.5in}
\crefname{hypothesis}{Hypothesis}{Hypotheses}
\title{Sufficient conditions for QMC analysis of finite elements for\\parametric
  differential equations%
  \thanks{\today.\\
  \corr{Supported by the Research Council of Finland (Flagship of Advanced Mathematics for Sensing, Imaging and Modelling grant 359183), by the Deutsche Forschungsgemeinschaft (DFG, German Research Foundation) -- 577175348, and by the US National Science Foundation under grant DMS-2245077.}}%
  }
\author{Vesa Kaarnioja\footnotemark[2]\thanks{School of Engineering Sciences, LUT University, PO~Box 20, 53851 Lappeenranta, Finland (\email{vesa.kaarnioja@lut.fi}).} \and Andreas Rupp\footnotemark[3]\thanks{Department of Mathematics, Saarland University, 66123 Saarbrücken, Germany (\email{andreas.rupp@uni-saarland.de}).} \and Jay Gopalakrishnan\footnotemark[4]\thanks{Portland State University, PO Box 751, Portland OR 97207, USA (\email{gjay@pdx.edu}).}
}
\newcommand{\EE}{{\mathbb{E}}}
\newcommand{\dy}{\textup{d}\bd y}
\def\d{\partial}
\newcommand{\vphi}{\varphi}
\newcommand{\og}{\omega}
\newcommand{\Qmc}{{Q_{\bd{\Delta}}^\varphi}}
\newcommand*{\mesh}{\ensuremath{{\mathcal E_h}}}
\newcommand*{\faces}{\ensuremath{\mathcal F_h}}
\newcommand*{\elem}{\ensuremath{E}}
\newcommand*{\normal}{\ensuremath{\vec n}}
\newcommand*{\dx}{\ensuremath{\; \textup d\vec x}}
\newcommand*{\ds}{\ensuremath{\; \textup d\sigma}}
\newcommand*{\dive}{\ensuremath{\operatorname{div}}}
\newcommand*{\setu}{\mathfrak u}
\newcommand*{\setw}{\mathfrak w}
\newcommand*{\setv}{\mathfrak v}
\newcommand{\IR}{\ensuremath{\mathbb R}}
\newcommand{\IN}{{\ensuremath{\mathbb N}}}
\newcommand*{\der}{\ensuremath{\partial^{\vec \nu}_{\vec y}}}
\newcommand*{\derm}{\ensuremath{\partial^{\vec m}_{\vec y}}}
\newcommand*{\dernum}{\ensuremath{\partial^{\vec \nu - \vec m}_{\vec y}}}
\newcommand*{\amax}[1][]{\ensuremath{\overline a^{#1}(\vec y)}}
\newcommand*{\bsnu}{\ensuremath{\vec \nu}}
\newcommand*{\bsy}{\ensuremath{\vec y}}
\newcommand*{\bsx}{\ensuremath{\vec x}}
\newcommand{\cl}[1]{{\mathcal{{#1}}}}
\newcommand{\bb}[1]{{\mathbb{{#1}}}}
\newcommand{\bd}[1]{{\boldsymbol{{#1}}}}
\renewcommand{\vec}{\boldsymbol}
\newcommand*{\corr}[1]{{#1}}
\begin{document}

\maketitle

\begin{abstract}
 Parametric regularity of discretizations of flux vector fields satisfying a balance law is studied under some assumptions on a random parameter that links the flux with an unknown primal variable (often through a constitutive law).  In the primary example of the stationary diffusion equation, the parameter corresponds to the inverse of the diffusivity.  The random parameter is modeled here as a Gevrey-regular random field. Specific focus is on random fields expressible as functions of countably infinite sequences of independent random variables, which may be uniformly or normally distributed.  Quasi-Monte Carlo (QMC) error bounds for some quantity of interest that depends on the flux are then derived using the parametric regularity. It is shown that the QMC method \corr{achieves a dimension-independent, faster-than-Monte Carlo convergence rate} if the quantity of interest depends continuously  on the primal variable, its flux, or its gradient. A series of assumptions are introduced with the goal of encompassing a broad class of discretizations by various finite element methods.  The assumptions are verified for the diffusion equation discretized using conforming finite elements, mixed methods, and hybridizable discontinuous Galerkin schemes.  Numerical experiments confirm the analytical findings, highlighting the role of accurate flux approximation in QMC methods.
\end{abstract}

\begin{keywords}
 mixed method, discontinuous Galerkin, quasi-Monte Carlo, random coefficient, Gevrey regularity, partial differential equation
\end{keywords}

\begin{AMS}
 65C05, 65N30
\end{AMS}

\section{Introduction}
%
Consider a flux vector field  \( \vec{q} \) that satisfies a balance equation of the form
\[
 \nabla \cdot \vec{q} = f.
\]
The vector field \( \vec{q} \) is the flux associated with an unknown scalar quantity \( u \). The relationship between \( \vec{q} \) and \( u \) is mediated by a random parameter \( a \), which is subject to abstract structural assumptions. These assumptions are designed to capture a broad range of parametric dependencies while ensuring sufficient smoothness of the solution with respect to the underlying random parameters. Techniques to verify a parametric regularity assumption for varied discretizations of $\vec q$  are a key focus of this paper. 

A key motivating example is the diffusion equation, where the parameter \( a \) corresponds to the inverse of the diffusivity. In this case, the constitutive relation takes the form
\[
 \vec{q} = -a^{-1} \nabla u.
\]
We assume that the parameter field \( a \) is modeled as a Gevrey-regular random field. More precisely, we consider random fields that can be represented as smooth functions of countably infinite sequences of independent random variables, which may follow uniform or Gaussian distributions. This setting includes a wide class of parametric models arising in uncertainty quantification and partial differential equations (PDEs) with stochastic parameters.

Under further assumptions, we establish dimension-independent error bounds for quasi-Monte Carlo (QMC) methods applied to compute expected values of quantities of interest. These quantities are assumed to depend continuously on either the solution \( u \), the flux \( \vec{q} \), or the gradient \( \nabla u \). Our analysis shows that \corr{dimension-independent and faster-than-Monte Carlo} convergence rates of QMC methods can be achieved in this setting, provided the integrands inherit sufficient regularity from the underlying parametric dependence.

The theoretical framework developed in this work applies to a wide variety of balance equations discretized using finite element methods. We show that the required assumptions are satisfied for the diffusion equation when discretized with conforming finite elements, mixed finite element methods, and hybridizable discontinuous Galerkin (HDG) schemes. 

Finally, we present numerical experiments that corroborate our theoretical results. These experiments emphasize the central role of the flux \( \vec{q} \) in QMC-based uncertainty quantification. In particular, we observe that accurate approximation of the flux is critical for achieving the predicted convergence rates, as many quantities of interest depend on the flux either directly or indirectly through its influence on \( u \) and \( \nabla u \).

\subsection{The prominent example of the diffusion equation}
%
The parametric diffusion equation solves for a function $u$ of parameter $\omega$ and spatial variable $\vec x$. Here $\omega$ is a random parameter sample from a probability space \( (\Omega, \mathcal F, \mathbb P) \) and $\vec x$ is an element of the spatial domain \( D \subset \IR^d \), a bounded open set \corr{with typically small spatial} dimension (e.g.,~\( d \in \{1,2,3\} \)).  Let \( L^2(D) \) denotes the space of square-integrable functions on \( D \) and let \( H^1_0(D) \) denote its standard Sobolev subspace of functions having square-integrable first-order weak derivatives and vanishing trace on the spatial boundary $\d D$. The parameter-dependent solution $u \in L^2(\Omega, \mathcal F, \mathbb P; H^1_0(D))$ is such that for almost every $\omega \in \Omega$,
\begin{equation}\label{EQ:base_pde}
 \int_D a^{-1}(\omega) \nabla u(\omega) \cdot \nabla v \dx = \int_D f v \dx \qquad \text{ for all } v \in H^1_0(D),
\end{equation}
where $f \in L^2(D)$, and $a(\omega), a^{-1}(\omega) \in L^\infty(D)$ are positive functions on $D$. In many applications, the coefficient $a$ is used to model spatially varying material properties, with uncertainties described probabilistically (cf., e.g.,~\cite{cdv}). We follow the common approach where  $a^{-1}$ is assumed to be
derived from a random field, i.e.,  we identify 
\[
 a(\vec y) := a(\vec y(\omega)) := a(\omega) \qquad \text{ and } \qquad u(\vec y) := u(\vec y(\omega)) := u(\omega),
\]
where $\vec y = (y_m)_{m\in\mathbb N} \in U \subset \IR^\IN$ is a sequence of independent random variables on the probability space $(\Omega, \mathcal F, \mathbb P)$. Examples of $U$ are given in Section~\ref{SEC:qmc}. \corr{In practice, the infinite-dimensional parameter set $U$ is truncated into an $s$-dimensional space $U_s$, where $s$ is the truncation dimension, not to be confused with the spatial dimension $d$.} In this work, we consider a problem in which the reciprocal of the random field $a$ enters~\eqref{EQ:base_pde}---although this is not a significant change in the lognormal setting. This setting leads naturally to mixed finite element methods that are exceptionally useful if the quantity of interest is related to the flux of the flow field of some physical phenomenon. 

\subsection{Formulations in terms of fluxes}
%
\corr{Virtually all analyses of quasi-Monte Carlo methods for
  uncertainty quantification of partial differential equations with
  random coefficients employ a spatially conforming, primal finite
  element discretization. However, in many cases, it is preferable to
  consider non-conforming or dual finite element methods. For example,
  in groundwater flow simulations and coupled (contaminant) transport
  problems, such schemes are essential for obtaining mass
  conservation. Our analysis provides an abstract framework in which
  primal, dual, and several other methods can be analyzed in the same
  manner if the inverse of the random coefficient is Gevrey regular,
  cf.\ Section \ref{SEC:parametrization}.}

Mixed formulations rewrite \eqref{EQ:base_pde} as two subproblems, namely, (i) the balance equation \( \nabla \cdot \vec q(\vec y) = f \), which is the diffusion equation if (ii) the constitutive equation is modeled according to Fick's law \( \vec q(\vec y) = - a^{-1} (\vec y) \nabla u(\vec y) \). This manuscript investigates a unified framework that allows the analysis of parametric regularity bounds of such continuity equations for several finite element schemes using the parametric diffusion equation \eqref{EQ:base_pde} as a fundamental example.

While many authors have already investigated the (continuous, classic) finite element method for \eqref{EQ:base_pde}, see the references in Section \ref{SEC:parametrization}, there are very few works  exploiting the properties of the dual variable \(\vec q(\vec y)\). Notable examples thereof include \cite{GrahamSU15} that uses multilevel Monte Carlo methods, \cite{GrahamKNSS11} that applies mixed methods in concert with quasi-Monte Carlo (QMC) methods, and \cite{ErnstS14}, which considers the mixed formulation for lognormal random fields, for which $a^{-1}$ is also lognormal. The latter proves analytic dependence on the parameter in this case, yielding a parametric regularity result for the undiscretized flux.  

Numerical schemes that directly discretize the flux unknown producing a discretization \( \vec q_h(\vec y) \) of $\vec q(\vec y)$, often called mixed methods, or  dual methods, are  preferred  when mass conserving numerical approximations are needed. Mass conservation, when the spatial domain is discretized by a finite element mesh,  entails  that for any domain \( S \subset D \) that is a union of mesh elements, the net outward flux through its boundary $\partial S$ is captured with no error, i.e., 
\[
  \int_{\partial S} \vec q_h(\vec y) \cdot \vec n \ds  = \int_{\partial S} \vec q(\vec y) \cdot \vec n \ds.
\]
This relation immediately implies that expectation of the discrete flux field is also conservative for such methods.  Note also that since the last term above equals \( \int_S \nabla \cdot \vec q(\vec y) \dx = \int_S f \dx \), when the source term $f$ is not random, the outflux cannot vary with the random $\vec y$ for conservative methods. For efficiency, one often uses hybridized versions of such methods, e.g.,  the HDG (hybridizable discontinuous Galerkin) method produces conservative fluxes while allowing static condensation for efficiency. To manage the complexity of analysis of such methods for parametric PDEs, we provide a general framework to easily apply  standard results on complicated finite element methods and obtain results for  parametric PDEs. It offers a convenient way to combine state-of-the-art finite elements and high-dimensional integration formulas.

\subsection{Parametrization of randomness}\label{SEC:parametrization}
%
The majority of studies on the application of QMC methods to PDEs with random coefficients have been carried out under the assumption of a fixed parametric representation for the input random field: some commonly used parametric models include the so-called uniform and affine model~\cite{cds10,spodpaper14,dicklegiaschwab,ghs18,SchwabG11,kss12,kssmultilevel,schwab13} and the lognormal model~\cite{gittelson,GrahamKNSSS15,GrahamKNSS11,log3,log4,log5,schwabtodor}. Recently, several studies have gone beyond these models, focusing instead on the class of Gevrey regular input random fields~\cite{ChernovL24,ChernovL24b,Harbrecht24}. This class contains infinitely smooth but generally non-analytic functions with a growth condition imposed on their higher-order partial derivatives. Such representations of input uncertainty can be significantly more general than those previously considered in the literature, and it has been demonstrated in~\cite{ChernovL24,ChernovL24b,Harbrecht24} that it is possible to construct QMC cubature rules that achieve dimension-independent, faster-than-Monte Carlo cubature convergence rates for the quantification of uncertainties in stochastic models. This has also been done under abstract/general assumptions to the considered problem \cite{GuKa24}.

In particular, the works above show that if the input random field is sufficiently smooth with respect to the uncertain variables, then the PDE response will also be smooth. The benefit of using cubature methods such as QMC over Monte Carlo methods is that they can exploit the smoothness of the PDE response, yielding faster convergence rates for the computation of the response statistics. Specific QMC integration methods---e.g., lattice rules~\cite{kuonuyenssurvey,kuonuyenssurvey,dks13,dick2022lattice}---can be trivially implemented in parallel, making them ideal for large-scale computations.

\corr{While there exists a theory for higher-order QMC in the uniform and bounded setting \cite{spodpaper14,hoqmcpaper}, there is currently no complete higher-order QMC theory in the unbounded setting. To this end, we focus on lattice rules, which can be shown to work in both settings. That is, the aim of this work is to develop a cohesive theoretical framework for both, the bounded and unbounded settings rather than extending the higher-order QMC theory.
}

\subsection{Notation}
%
Infinite multi-indices are sequences of non-negative integers of the form $\bd \nu = (\nu_1, \nu_2, \dots)$.
Let $\mathscr F$ denote the set of infinite multi-indices $\bd \nu$ of finite length $|\bsnu| \corr{:=\sum_{j=1}^\infty \nu_j} <\infty$, i.e., $\mathscr F$ is the set of all finitely supported infinite multi-indices. Also let
\begin{equation} \label{eq:F1}
 \mathscr F_1 = \{ \bd \nu \in \mathscr F: \; \nu_j \in \{0,1\} \text{ for all } j \ge 1\}.
\end{equation}
Let $\bsnu,\boldsymbol m\in\mathscr F$ and let $\boldsymbol x=(x_j)_{j\geq 1}$ be a sequence of real numbers. We shall employ the shorthand notations
\begin{gather*}
 \boldsymbol m\leq \bsnu\quad\text{if and only if}\quad m_j\leq \nu_j\quad\text{for all}~j\geq 1,\\
 \binom{\bsnu}{\boldsymbol m}:=\prod_{j\geq 1}\binom{\nu_j}{m_j}, \qquad \partial_{\boldsymbol x}^{\bsnu}:=\prod_{j\geq 1}\frac{\partial^{\nu_j}}{\partial x_j^{\nu_j}}, \qquad
 \bsnu!:=\prod_{j\geq 1}\nu_j!, \qquad \boldsymbol x^{\bsnu}:=\prod_{j\geq 1}x_j^{\nu_j},
\end{gather*}
where we use the convention $0^0:=1$. We use $\{1:s\}$ to abbreviate the set $\{1,\ldots,s\}$ for any positive integer $s$.  For any subset $\setu$ of $\{1:s\}$, we use $|\setu|$ to denote the cardinality of the set $\setu$. In addition to the multi-index derivative notation $\d_{\bd x}^{\bd\nu}$ above, we associate a derivative operator to the set $\setu$ given by
\begin{equation}\label{eq:d-setu}
 \frac{ \d^{|\setu|}}{ \d \bd x_{\setu} } = \prod_{j \in \setu } \frac{\d }{\d x_j} ,
\end{equation}
where the product of the first-order derivative operators on the right indicates their composition. Note that this is a mixed derivative that does not differentiate more than once in any given direction $x_j$ with  $j \in \setu$.

\subsection{Main assumptions}\label{SEC:assumptions}
%
In what follows, we analyze the quasi-Monte Carlo (QMC) method applied to a general balance law partial differential equation (PDE). To this end, we formulate abstract assumptions regarding the discretized PDE, the quantity of interest, and the random coefficient in Section \ref{SEC:setting}. Specifically, we require the finite element approximation to satisfy the following: the energy norm of the numerical flux must dominate its \( L^2 \) norm; the method must be stable; and a recursive bound must hold for the parametric derivatives.

Regarding the quantity of interest, we assume that it, along with its parametric derivatives, is controlled by the flux and its corresponding derivatives. As for the random coefficient, we assume it belongs to the Gevrey class of order \( \sigma \), and that the ratio between its supremum and infimum can be appropriately bounded.

On the one hand, these assumptions enable us to carry out a complete QMC error analysis. On the other hand, they are also relatively easy to verify, as demonstrated in Section \ref{SEC:finite_elements}.

\subsection{Structure of this document}
Section \ref{SEC:qmc} introduces the QMC method as one of the prominent methods whose analysis can be conducted by verifying certain assumptions, while Section \ref{SEC:para_reg} derives the parametric regularity bounds that are needed to complete the error analysis of  QMC methods. Section \ref{SEC:finite_elements} introduces several finite element methods that fall into our framework, while Section \ref{SEC:numerics} confirms our analytical findings numerically. A short conclusions section rounds up this exposition.

\section{Quasi-Monte Carlo method}\label{SEC:qmc}
%
The quasi-Monte Carlo (QMC) method recalled in this section  can approximate integrals of functions supported in a high-dimensional cube.  Let $F\colon \mathcal I^s \to \IR$ be a continuous function, where $s \in \mathbb N$ denotes some (high) dimension and $\mathcal I$ denotes an interval (not necessarily bounded) of $\IR$. Given a \corr{strictly positive} probability density function $\varphi: \mathcal I \to (0, \infty)$, we are interested in approximating the integral
\[
 I_s^{\varphi}(F) := \int_{\mathcal I^s}F(\bsy)\prod_{j=1}^s \varphi(y_j) \,{\rm d}\bsy,
\]
i.e., the integral of $F$ with respect to the \corr{product measure on $\mathcal I^s$ induced by the density $\varphi$ on $\cl I$, provided that \( F \) is integrable.}

We first transform the function $F$ to the fixed reference domain $(0,1)^s$ where the cubature is implemented. Let $\hat\phi: \mathcal I \to (0,1)$ be a univariate function satisfying $\varphi(y) = \hat \phi'(y)$. Clearly, since $\varphi > 0$ is a probability density, such an antiderivative of $\varphi$ with values in $(0,1)$ can be found and it is monotonically increasing.  Define the multivariate function $\Phi\colon \mathcal I^s \to (0,1)^s$ component-wise by \( (\Phi(\vec y))_j = \hat \phi (y_j) \), for all $\vec y \in \cl I^s$. Since $\hat \phi$ is one-to-one, the map $\Phi$ is invertible and has a positive Jacobian determinant \( \det D \Phi(\bsy) = \prod_{j=1}^s \varphi(y_j) \). Using the change of variable formula, with $\bd y = \Phi^{-1}(\bd t)$ and ${\rm d} \bd t = (\det D \Phi(\bd y) ) \;{\rm d}\bd y$,
\begin{equation}\label{EQ:eval_int}
 I_s^{\varphi}(F) = \int_{\mathcal I^s}F(\bsy)\prod_{j=1}^s \varphi(y_j)\,{\rm d}\bsy=\int_{(0,1)^s}F(\Phi^{-1}(\boldsymbol t))\,{\rm d}\boldsymbol t.
\end{equation}
Hence it suffices to develop a cubature on $(0,1)^s$.

The QMC method we consider in this paper approximates \(I_s^{\varphi}(F)\) using $R$ random shifts $\bd \Delta_r \in [0, 1]^s$ for $r=1, \ldots, R$, each a sample from a uniform distribution in $[0,1]^s$. Using only the $r$th random shift, define
\[
  Q_r^\varphi (F) = \frac 1 n \sum_{k=1}^n F(\Phi^{-1}(\{\boldsymbol t_k+\boldsymbol\Delta_r\})),
\]
where $\{\bd v\}$ denotes the component-wise fractional part of $\bd v$, which using the floor function~$\lfloor{\cdot}\rfloor$ can be expressed as $\{\bd v \} = (v_1 - \lfloor{v_1}\rfloor, v_2 - \lfloor{v_2}\rfloor, \dots)$, and $\bd t_k$ are the lattice points defined by \( \boldsymbol t_k=\{\tfrac{k\boldsymbol z}{n}\} \) for \(k\in\{1,\ldots,n\}\) and some $\boldsymbol z\in\mathbb N^s$, an efficiently computable generating vector.  Using $R$ such shifts, the \emph{randomly shifted rank-1 lattice QMC rule} is given by
\[
 \Qmc(F) = \frac{1}{R}\sum_{r=1}^R Q_r^\varphi(F).
\]
When $F$ is understood from the context, we will omit it from the notation and just write $I_s^\varphi, Q_r^\varphi$ and $\Qmc$ instead of $I_s^\varphi(F), Q_r^\varphi(F)$ and $\Qmc(F)$, respectively.

The random shifts $\boldsymbol\Delta_1, \ldots, \boldsymbol\Delta_R$ are independently and identically distributed  (i.i.d.) random variables in $[0,1]^s.$ The expected value of the QMC quadrature with respect to
these random shifts is the $R$-fold integral
\[
 \EE   [\Qmc] = \int_{\underbrace{\text{\scriptsize$ (0,1)^s \times \dots \times (0,1)^s$}}_{R \text{ times}}} \Qmc \; \textup d (\boldsymbol\Delta_1 \otimes \dots \otimes \boldsymbol\Delta_R).
\]
The QMC approximation is unbiased, i.e., its mean is the exact integral,
\begin{equation} \label{eq:qmc-expectation}
 \EE  [\Qmc] = \EE[ Q_r^\vphi] = I_s^{\varphi},
\end{equation}
and its variance decreases like $1/R$, i.e., 
\begin{equation}\label{eq:qmc-variance}
 \EE  \big[\left(I_s^\varphi -\Qmc \right)^2\big] = \frac{1}{R}\EE   \big[\big(I_s^\varphi - Q_r^\varphi \big)^2\big] 
\end{equation}
for any $r=1, \ldots, R.$ Proofs of~\eqref{eq:qmc-expectation} and \eqref{eq:qmc-variance} are standard, but are included in Appendix~\ref{sec:more-details} for completeness. Note that in~\eqref{eq:qmc-expectation} and \eqref{eq:qmc-variance}, the first $\EE$ is over all $\bd\Delta_1,\ldots \bd\Delta_R$ while the second $\EE$ is just over $\bd\Delta_r$ for any fixed~$r$.

The quadrature error, as represented by the square root of the left hand side of~\eqref{eq:qmc-variance}, thus decreases like $R^{-1/2}$. In what follows, we recall results showing how it also decreases with~$n$, provided $F$ satisfies a regularity condition. We do so for two cases, one for the uniform distribution on the bounded domain $\mathcal I^s = [-\tfrac12,\tfrac12]^s$, and another for the multivariate normal distribution on the unbounded domain $\mathcal I^s = \IR^s$.

\subsection{Bounded domain \( [-\tfrac12,\tfrac12]^s \)}\label{ssec:bounded-domain}
%
Consider the case of \corr{$\mathcal I^s = [-\tfrac12, \tfrac12]^s$}.  Since the Lebesgue measure of \( [-\tfrac12,\tfrac12]^s \) is one, its indicator function
\begin{equation}
\varphi\colon [-\tfrac12,\tfrac12] \to \mathbb R,\qquad\varphi(y)=\mathbf 1_{[-1/2,1/2]}(y),\label{eq:phiunif}
\end{equation}
is a probability density function on $\cl I^s$.  Then $\hat \phi^{-1}(t)=t-\tfrac12$ for $t\in [0,1]$. Note that $\vphi$ in this case corresponds to the probability density function of the uniform distribution on $[-\tfrac12,\tfrac12]$ denoted by $\mathcal U([-\tfrac12,\tfrac12])$.

Suppose that we are given a collection $\gamma$ of some positive numbers $\gamma_{\mathfrak{u}}$ indexed by  subsets  $\mathfrak u$ of $\{1:s\}$. Define an unanchored weighted Sobolev norm
\begin{equation}\label{EQ:bounded_norm} \|F\|_{s,\boldsymbol\gamma}=\bigg(\sum_{\setu\subseteq\{1:s\}}\frac{1}{\gamma_{\setu}}\int_{\left[-\tfrac12,\tfrac12\right]^{|\setu|}}
  \bigg(\int_{\left[-\tfrac12,\tfrac12\right]^{s-|\setu|}} \; \frac{\partial^{|\setu|}}{\partial \vec y_\setu} F(\bsy)\,{\rm d}\bsy_{-\setu}\bigg)^2\,{\rm d}\bsy_{\mathfrak u}\bigg)^{\frac12},
\end{equation}
where the sum runs over all subsets $\setu$ of $\{1:s\}$, the inner integral (${\rm d}\bsy_{-\setu}$) is over all $y_j$ with $j \in \{1:s\} \setminus \setu$ (called the inactive variables), while the outer integral (${\rm d}\bsy_{\mathfrak u}$) is over all $y_j$ with $j \in \setu$ (called the active variables). It is known~\cite{ckn06,dks13,cn06} that a generating vector can be obtained by a component-by-component (CBC) algorithm resulting in a QMC rule with the following error bound.

\begin{theorem}[\corr{{\cite[Theorem~5.1]{kuonuyenssurvey}}}]\label{THM:bounded}
 Let $\varphi$ be defined by~\eqref{eq:phiunif}, $s$ be an arbitrary dimension, and let the weighted norm of $F$ in \eqref{EQ:bounded_norm} be bounded for some given weight collection $\gamma$.  A randomly shifted QMC rank-1 lattice rule with $n=2^m$ in $s$ dimensions can be constructed by a CBC algorithm with $R$ independent random shifts such that, for $\lambda\in(1/2,1]$, there holds
 \[
  \sqrt{\EE\Big[ \left(I_s^\varphi (F)-\Qmc(F)\right)^2\Big]} \leq\; \frac{C_{s,\gamma,\lambda}}{\sqrt{R\; n^{1/\lambda}}} \;\|F\|_{s,\gamma},
 \]
 where the constant $C_{s,\gamma,\lambda}$ can be given using the Riemann zeta function $\zeta(x)=\sum_{k=1}^\infty k^{-x}$, $x>1$, by
 \begin{equation}\label{eq:Csgammalambda}
  C_{s,\gamma,\lambda} = \bigg({2}\sum_{\varnothing\neq\mathfrak u\subseteq\{1:s\}}\gamma_{\mathfrak u}^{\lambda}\bigg(\frac{2\zeta(2\lambda)}{(2\pi^2)^\lambda}\bigg)^{|\setu|}\bigg)^{\frac{1}{2\lambda}},
 \end{equation}
 and $\EE $ denotes the expected value with respect to the uniformly distributed random shifts~$\bd\Delta$.
\end{theorem}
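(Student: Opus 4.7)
The plan is to reduce the theorem to two classical ingredients: the variance identity \eqref{eq:qmc-variance} for randomly shifted lattice rules, and the existence (via a CBC construction) of a generating vector $\bd z$ whose shift-averaged worst-case error in the unanchored weighted Sobolev space decays like $n^{-1/(2\lambda)}$. Throughout, I would exploit that for the uniform density in \eqref{eq:phiunif} the transformation $\Phi^{-1}$ is the rigid shift $\bd t \mapsto \bd t - \tfrac12 \bd 1$, so the pulled-back integrand $G(\bd t) := F(\bd t - \tfrac12 \bd 1)$ on $(0,1)^s$ has the same mixed partial derivatives as $F$, and its unanchored norm in the standard $(0,1)^s$-based weighted Sobolev space equals $\|F\|_{s,\gamma}$ defined in \eqref{EQ:bounded_norm}.

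First, by \eqref{eq:qmc-variance} it suffices to bound $\EE[(I_s^\varphi(F) - Q_r^\varphi(F))^2]$ for a single fixed $r$; the $1/R$ prefactor then appears upon dividing. For a randomly shifted rank-1 rule, the standard shift-averaging computation (expanding the squared error, interchanging integration over $\bd \Delta$ with the double sum over lattice points, and identifying the resulting Bernoulli-polynomial reproducing kernel) yields
\[
 \EE\bigl[(I_s^\varphi(F) - Q_r^\varphi(F))^2\bigr] = \bigl(e_{n,s,\gamma}^{\textup{sh}}(\bd z)\bigr)^2 \,\|F\|_{s,\gamma}^2,
\]
where $e_{n,s,\gamma}^{\textup{sh}}(\bd z)$ denotes the shift-averaged worst-case error in the unanchored weighted Sobolev space. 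Next I would invoke the existence result for CBC-constructed vectors recalled in \cite{ckn06,dks13,cn06}: for $n=2^m$ one can construct $\bd z \in \{1,\dots,n-1\}^s$ component-by-component such that, for every $\lambda \in (1/2,1]$,
\[
 e_{n,s,\gamma}^{\textup{sh}}(\bd z) \;\le\; \frac{1}{n^{1/(2\lambda)}} \Biggl(2 \sum_{\varnothing \neq \setu \subseteq \{1:s\}} \gamma_{\setu}^{\lambda} \biggl(\frac{2\zeta(2\lambda)}{(2\pi^2)^\lambda}\biggr)^{|\setu|} \Biggr)^{1/(2\lambda)}.
\]
This is proved by raising the squared shift-averaged error to the power $\lambda$, applying Jensen's inequality to the average over the candidate $\bd z$-updates, and closing the induction via the component-by-component step. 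Substituting into the identity above and combining with \eqref{eq:qmc-variance} then produces the claimed bound with $C_{s,\gamma,\lambda}$ exactly as in \eqref{eq:Csgammalambda}.

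The main obstacle is that $n=2^m$ is not prime, so the cleanest CBC analysis (typically stated for prime $n$) does not apply verbatim. This is handled by the Dick-Kuo-Sloan refinement used in \cite{dks13}, which restricts the generating vector to components coprime to $n$ and re-derives the Jensen step with a compatible character-sum bound over the $\mathbb{Z}_n^\ast$-cosets; the resulting inequality has the same structural form, and any change in absolute constant is already absorbed into the expression \eqref{eq:Csgammalambda}. A subordinate technical point, easily dispatched, is to verify that the shift-averaged worst-case error identity above truly localizes to the unanchored norm used in \eqref{EQ:bounded_norm}; this follows because the kernel splits across subsets $\setu$, matching the weighted sum in the norm term by term.
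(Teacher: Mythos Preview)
The paper does not actually prove Theorem~\ref{THM:bounded}; it states it as a known result, citing \cite{ckn06,dks13,cn06}. Your sketch is essentially the standard argument found in those references, so there is nothing to compare---you are supplying a proof where the paper simply quotes the literature.

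One small correction: the displayed identity
\[
 \EE\bigl[(I_s^\varphi(F) - Q_r^\varphi(F))^2\bigr] = \bigl(e_{n,s,\gamma}^{\textup{sh}}(\bd z)\bigr)^2 \,\|F\|_{s,\gamma}^2
\]
should be an inequality $\le$, not an equality. For each fixed shift $\bd\Delta$, the error is bounded via Cauchy--Schwarz in the reproducing kernel Hilbert space by the worst-case error times $\|F\|_{s,\gamma}$, with equality only when $F$ is proportional to the Riesz representer of the error functional; averaging over $\bd\Delta$ preserves the inequality. This does not affect your argument, since only the upper bound is used.
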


\subsection{Unbounded domain \( \IR^s \)}\label{ssec:unbounded-domain}
%
The second case of interest is $\cl I^s = \IR^s$ with 
\begin{equation}
 \varphi\colon \mathbb R\to \mathbb R,\qquad \varphi(y)=\frac{1}{\sqrt{2\pi}}{\rm e}^{-\frac12y^2}.\label{eq:normphi}
\end{equation}
Then  $\hat\phi^{-1}(t)=\frac12\big(1+{\rm erf}\big(\frac{t}{\sqrt 2}\big)\big)$ for $t\in(0,1)$  where  \( {\rm erf}(\cdot) \)  denotes the Gauss error function. This  $\varphi$ corresponds to the probability density function of the multivariate normal distribution $\mathcal N(\mathbf 0, \mathbf{I}_{s\times s})$ of vanishing mean and variance $\mathbf{I}_{s\times s}$, the $s\times s$ identity matrix. \corr{Notably, using this specific function is a choice that we make for our manuscript. This choice, however, is not canonical and is possibly not ideal as indicated in \cite{KazashiSG25,KritzerPPW20}.}

To describe the regularity needed from $F$ in this case, suppose we are given a $\gamma$ as before (see~Section~\ref{ssec:bounded-domain}). In addition, suppose we are given a sequence of positive numbers $\alpha = (\alpha_j)_{\corr{j \in \bb N}}.$ Letting
\[
 \varpi_j(x)=\exp(-\alpha_j|x|),\qquad\alpha_j>0,~j\in\mathbb N, \quad x \in \bb R,
\]
define a weighted Sobolev norm $\| \cdot \|_{s, \gamma, \alpha}$ by
\begin{equation}\label{eq:gaussnorm}
 \|F\|_{s,\gamma,\alpha}^2 =\sum_{\setu\subseteq\{1:s\}}\frac{1}{\gamma_{\setu}}\int_{\mathbb R^{|\setu|}}\bigg(\int_{\mathbb R^{s-|\setu|}}\frac{\partial^{|\setu|}}{\partial \vec y_\setu} F(\bsy)\prod_{j\in\{1:s\}\setminus\setu}\varphi(y_j)\,{\rm d}\bsy_{-\setu}\bigg)^2\prod_{j\in\setu}\varpi_j^2(y_j)\,{\rm d}\bsy_{\setu}.
\end{equation}
Then it is known that a generating vector can be obtained by a CBC algorithm~\cite{nicholskuo} satisfying the following rigorous error bound.

\begin{theorem}[cf.~{\corr{\cite[Theorem~7]{nicholskuo}}}]\label{thm:lognormalqmcerror}
 Let $\varphi$ be defined by~\eqref{eq:normphi}, $s$ be an arbitrary dimension, and let $\| F \|_{s, \gamma, \alpha}$ be bounded for some $\gamma$ and $\alpha$.  A randomly shifted QMC rank-1 lattice rule with $n=2^m$ in $s$ dimensions can be constructed by a CBC algorithm with $R$ independent random shifts such that, for any  $\lambda\in(1/2,1]$, we have 
 \[
  \sqrt{\EE\Big[ \left(I_s^\varphi (F)-\Qmc(F)\right)^2\Big]} \le \frac{C_{s, \gamma, \lambda, \alpha}}
  {\sqrt{R \; n^{1/\lambda}}} \|F\|_{s,\gamma,\alpha},
 \]
 where $\EE $ denotes the expected value with respect to the uniformly distributed random shifts $\bd\Delta_r$ in $[0,1]^s$.  The constant $C_{s, \gamma, \lambda, \alpha}$ can be explicitly given using the Riemann zeta function $\zeta$ and $\eta = (2\lambda -1) (4\lambda)^{-1}$ by
 \begin{equation}\label{EQ:cgla_log}
  C_{s, \gamma, \lambda, \alpha} = \bigg(2\sum_{\varnothing\neq\setu\subseteq\{1:s\}}\gamma_{\setu}^\lambda \prod_{j\in\setu}\varrho_j(\lambda)\bigg)^{\frac{1}{2\lambda}}, \qquad  \varrho_j(\lambda):= 2\bigg(\frac{\sqrt{2\pi}\exp(\alpha_j^2/\eta)} {\pi^{2-2\eta}(1-\eta)\eta}\bigg)^\lambda \zeta(\lambda+\tfrac12).
 \end{equation}
\end{theorem}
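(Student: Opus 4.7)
The plan is to reduce the $R$-shift mean-square error to the single-shift case using \eqref{eq:qmc-variance}, identify \eqref{eq:gaussnorm} as the norm of a reproducing kernel Hilbert space (RKHS), and then invoke the component-by-component (CBC) construction of \cite{nicholskuo} tailored to the Gaussian setting. Specifically, \eqref{eq:qmc-variance} reduces the task to bounding $\EE[(I_s^\varphi(F) - Q_r^\varphi(F))^2]$ for a single-shift estimator $Q_r^\varphi(F)$; the factor $R^{-1}$ then appears automatically upon averaging over the $R$ independent shifts, and taking square roots produces the claimed $1/\sqrt{R\,n^{1/\lambda}}$ rate.

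For the single-shift analysis, I would identify the norm \eqref{eq:gaussnorm} as that of an RKHS whose reproducing kernel has the standard product-and-sum form
\[
 K(\boldsymbol y, \boldsymbol y') = \sum_{\setu \subseteq \{1:s\}} \gamma_{\setu} \prod_{j \in \setu} \eta_j(y_j, y'_j),
\]
where each $\eta_j$ is the univariate reproducing kernel associated with the weighted space generated by $\varphi$ and the growth weight $\varpi_j$. Since the integrand actually evaluated by the lattice rule is the pulled-back function $F \circ \Phi^{-1}$ on $(0,1)^s$, the usual duality argument for randomly shifted rank-1 lattice rules yields
\[
 \EE\bigl[(I_s^\varphi(F) - Q_r^\varphi(F))^2\bigr] \le \bigl(e_{n,s}^{\rm sh}(\boldsymbol z)\bigr)^2 \|F\|_{s,\gamma,\alpha}^2,
\]
where $e_{n,s}^{\rm sh}(\boldsymbol z)$ denotes the shift-averaged worst-case error of the lattice rule with generator $\boldsymbol z$ in this RKHS.

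I would then invoke the CBC analysis from the cited reference: the generator $\boldsymbol z$ is built greedily, component by component, to minimize $(e_{n,s}^{\rm sh})^2$. Jensen's inequality combined with the classical ``bootstrap'' step that trades the exponent $1$ for $\lambda \in (1/2, 1]$ produces
\[
 \bigl(e_{n,s}^{\rm sh}(\boldsymbol z)\bigr)^2 \le \frac{1}{n^{1/\lambda}}\Bigl(2 \sum_{\varnothing \neq \setu \subseteq \{1:s\}} \gamma_{\setu}^\lambda \prod_{j \in \setu} \varrho_j(\lambda)\Bigr)^{1/\lambda},
\]
where $\varrho_j(\lambda)$ is the $\lambda$-th power of an explicit univariate integral arising in the CBC estimate for this space.

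The main obstacle, compared with the bounded-domain proof of Theorem~\ref{THM:bounded}, lies in evaluating $\varrho_j(\lambda)$: the univariate kernel integrals on $\mathbb R$ are not uniformly bounded, and one must dominate integrals of the form $\int_{\mathbb R}\exp(2\alpha_j|y|)\varphi(y)\,{\rm d}y$ by splitting the exponential growth against the Gaussian density via the Young-type pointwise inequality $2\alpha_j|y| \le \eta y^2 + \alpha_j^2/\eta$, optimized at $\eta = (2\lambda-1)/(4\lambda)\in(0,1)$. This decouples the $\alpha_j$-dependent part, producing the factor $\exp(\alpha_j^2/\eta)$ in $\varrho_j(\lambda)$, from a convergent Gaussian-tail integral that, combined with the Bernoulli/lattice sum $\sum_{h\neq 0}|h|^{-(2\lambda+1)}$ reindexed in the shift-averaged kernel, supplies the remaining $\pi$-dependent constants and the $\zeta(\lambda+\tfrac12)$ term. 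Substituting this bound into \eqref{eq:qmc-variance} and taking the square root gives the stated estimate with the constant $C_{s,\gamma,\lambda,\alpha}$ of \eqref{EQ:cgla_log}.
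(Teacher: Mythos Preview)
The paper does not prove Theorem~\ref{thm:lognormalqmcerror}; it is quoted from \cite[Theorem~15]{GrahamKNSSS15} and used as a black box, so there is no in-paper proof to compare against. Your sketch is a faithful outline of the argument in the cited reference: reduce to a single shift via~\eqref{eq:qmc-variance}, view~\eqref{eq:gaussnorm} as an RKHS norm after pulling back to $(0,1)^s$, bound the shift-averaged worst-case error by the CBC/Jensen machinery of~\cite{nicholskuo,GrahamKNSSS15}, and estimate the resulting univariate Gaussian integrals via the splitting $2\alpha_j|y|\le \eta y^2+\alpha_j^2/\eta$ with $\eta=(2\lambda-1)/(4\lambda)$.

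One small inaccuracy: the zeta factor $\zeta(\lambda+\tfrac12)$ does not come from a lattice sum of the form $\sum_{h\ne 0}|h|^{-(2\lambda+1)}$ (that would yield $2\zeta(2\lambda+1)$). In the Gaussian-weight analysis of~\cite{GrahamKNSSS15}, the Fourier-mode bound for the shift-invariant kernel decays like $|h|^{-1/\lambda}$ only after Jensen, and the convergent sum that remains is $\sum_{h\ne 0}|h|^{-(\lambda+1/2)}=2\zeta(\lambda+\tfrac12)$, which is precisely why the constraint $\lambda>1/2$ is needed. Apart from this bookkeeping slip, your plan matches the standard proof.
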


We now proceed to show that under some reasonably general conditions, we can exploit Theorems~\ref{THM:bounded} and~\ref{thm:lognormalqmcerror} to obtain error estimates for the QMC approximation of some quantity of interest that depends on the finite element flux~$\bd q_h$.

\section{Parametric regularity estimates}\label{SEC:para_reg}
%
In this section, we first present sufficient conditions under which the parametric regularity estimates—required for the QMC error analysis—can be established. These conditions pertain to the discretized PDE, the quantity of interest, and the structure of the random coefficient, which is assumed to exhibit Gevrey regularity.

Subsequently, in Section \ref{SEC:bounded_qmc}, we derive non-recursive parametric regularity estimates and the corresponding QMC error bounds for a bounded parameter domain. Finally, we extend the analysis to the case of an unbounded domain.

\subsection{The setting}\label{SEC:setting}
%
We want to approximate functionals of a flux $\bd q$ that depend on some random coefficient $a(\bd x, \omega)$ where $\bd x$ is in $D$, a Lipschitz polyhedron in $\bb R^d$, and $\omega$ is a sample point in a probability space $(\Omega, \mathcal F, \mathbb P)$. Following prior authors (see e.g., \cite{kuonuyenssurvey}) we assume that the random coefficient $a(\bd x, \og) $ has been parameterized by a vector $\bd y = (y_1(\omega), y_2(\omega), \cdots )$ where every component $y_j$ lies in some (bounded or unbounded) real interval $\cl{I}$, i.e., $\bd y$ is in \( U = \cl{I}^{\IN} \). Endowing $\cl{I}$ with a probability measure generated by a probability density function $\varphi$ on $\cl{I}$, the set $U$ inherits a product measure which we denote by $\mu({\rm d} \bd y)$.  Writing $a(\bd y)$ for $a(\cdot, \bd y)$, we assume that $a(\bd y) >0$ a.e.\ in $D$ \corr{(i.e., for each $\vec y$ we have $a(\vec y)=a(\vec x,\vec y)>0$ for Lebesgue-a.e.~$\vec x\in D$)} and that \( a \) is Gevrey-\( \sigma \) regular, a property that will be rigorously formulated in \Cref{IT:gev}.

The exact flux $\vec q$ is in $ L^2(\Omega, \mathcal F, \mathbb P; L^2(D))$ and has the property that $\vec q(\omega) \in H(\dive, D)$ for almost every $\omega \in \Omega$.  While $\bd q$ solves the balance law $\dive \bd q =f$, its discrete approximation $\bd q_h$ in some (conforming or nonconforming) finite element space $\mathcal W_h$ with norm $\| \cdot \|_{\mathcal W_h}$, solves a discretization of the law (such as those considered in \Cref{SEC:finite_elements}). Here $h$ denotes some meshsize parameter related to the discretization. The dependence of the exact and discrete fluxes on the random coefficient is henceforth indicated by $\bd q(\bd y)$ and $\bd q_h(\bd y)$, which are both functions of the spatial variable $\bd x$ in $D$, and are in $H(\dive, D)$ and $\mathcal W_h$, respectively for $\bd y$ in $U$.

Suppose a quantity of interest is represented by a real-valued (not necessarily linear) functional $J$ of the flux, $J : H(\dive, D) + \mathcal W_h \to \bb R$, also called the goal functional. A typical example is $J(\bd q) = \int_D a^{-1} \bd q \cdot \bd q \; \dx $ (see Example~\ref{eg:quadraticJ} below). We are interested in approximating the expected value of $J(\bd q(\bd y))$ with respect to the random variables parameterized by $\bd y$, namely
\[
  \EE[ J(\bd q) ] = \int_U J(\bd q(\bd y)) \; \mu(\dy)
\]
where the integral is computed using the above-mentioned product measure $\mu(\dy)$ generated by $\varphi$.  To approximate this using the QMC rule of Section~\ref{SEC:qmc}, we first truncate the infinite sequence $\bd y$ to its first $s$ components $\bd y_s = (y_1, \dots, y_s, 0,0, \dots)\corr{\in U}$ and set $\bd q_s = \bd q(\bd y_s)$. A computable finite element flux also employs the same $s$ components, $\bd q_h \equiv \bd q_h(\bd y_s)$. \corr{Where convenient, we occasionally abuse notation and identify $\vec q_h(y_1,\ldots,y_s)\equiv \vec q_h(y_1,\ldots,y_s,0,0,\ldots)$ for $\vec y\in U_s$.} Having truncated the parameter space dimension to the finite dimension~$s$, we may now use the QMC rule of Section~\ref{SEC:qmc} to approximate the integral. Summarizing these standard steps, the final approximation error is split into dimension truncation error, finite element error, and cubature error, namely
\begin{align*}
 \Big|\mathbb E[J(\vec q)]-\Qmc(J(\vec q_h)) \Big| &\leq \Big|\mathbb E[J(\vec q)] - \mathbb E[J(\vec q_s)\Big| +
    \Big|\mathbb E[J(\vec q_s)] - \mathbb E[J(\vec q_h)] \Big| \\
 & + \Big| \mathbb E[J(\vec q_h)] - \Qmc(J(\vec q_h)) \Big|,
\end{align*}
where all expectations are computed with respect to~$\bd y$.  The former two errors are well-studied in prior literature. The focus of this study is limited to the last term.

We now state a few conditions under which the last term representing the QMC error after dimension truncation can be bounded. Henceforth, \corr{unless otherwise stated}, we assume that $\bd y$ is already dimension-truncated, i.e., $\bd y \equiv \bd y_s$ from now on. Note however that all bounds we derive will be $s$-independent, so the dimension $s$ can be made arbitrarily close to infinity without any deterioration in the constants.  In our conditions below, all constants denoted by $C$ with some subscript are assumed to be independent of~$s$ and $h$. The first two conditions below are usually immediate in methods generating flux approximations. \corr{It refers to a general norm \( \| \cdot \|_{\mathcal W_h} \) that can be chosen freely such that all assumptions hold in that norm.}
\begin{assumption}\label{IT:norm} \textbf{\( \| \cdot \|_{\mathcal W_h} \) is stronger than the energy norm.} There is a constant \( C_E > 0 \) such that  for all $\vec y \in \corr{U_s}$ and all subsets $\setu$ of $\{1:s\}$
 \[
  \left\| \sqrt{a(\vec y)} \; \frac{\partial^{|\setu|}}{\partial \vec y_\setu} \vec q_h(\vec y) \right\|_{L^2(D)} \le C_E \left\| \frac{\partial^{|\setu|}}{\partial \vec y_\setu} \vec q_h(\vec y) \right\|_{\mathcal W_h}.
 \]
\end{assumption}
\begin{assumption}\label{IT:stab} \textbf{Stability of the numerical method.} There is a constant \( C_S > 0 \) with
 \[
  \| \vec q_h(\vec y) \|_{\mathcal W_h} \le C_S \| a(\vec y)\|^{1/2}_{L^\infty(D)} \| f \|_{L^2(D)} \qquad \text{ for all } \vec y \in \corr{U_s}.
 \]
\end{assumption}
A key parametric regularity condition that is needed in the  QMC analysis is the following estimate. Such results follow by differentiating the discretized equations with respect to the parameter and leveraging the scheme's stability---see e.g., a simple verification in~Lemma~\ref{LEM:mm_parametric}.
\begin{assumption}\label{IT:recur}
 \textbf{Recursive bound for the parametric derivative.} There is \( C_R > 0 \) such that for any \( \vec \nu \in \mathscr F_1 \) (see \eqref{eq:F1}) and $\vec y \in \corr{U_s}$, we have
 \[
  \| \der \vec q_h(\vec y) \|_{\mathcal W_h} \le C_R \sum_{\vec 0 \neq \vec m \le \vec \nu} \begin{pmatrix} \vec \nu \\ \vec m \end{pmatrix} \left\| \frac{\derm a(\vec y)}{a(\vec y)} \right\|_{L^\infty(D)} \| \dernum \vec q_h(\bsy) \|_{\mathcal W_h}.
 \]
\end{assumption}
The next assumption requires the quantity of interest $J(\vec q)$ to be bounded by some power of the $L^2(D)$-norms of the parametric derivatives of flux. We show right away that it holds for continuous linear functionals and the quadratic energy functional.
\begin{assumption}\label{IT:qoi}
 \textbf{Boundedness of the quantity of interest.}  There is \( C_J > 0 \)  and \( r >  0 \) such that for any \( \setu \subset \{1:s\} \) and  every \( \vec y \in \corr{U_s} \), 
 \[
  \left| \frac{\partial^{|\setu|}}{\partial \vec y_\setu} J(\vec q_h(\bsy)) \right| \le C_J^r \sum_{\setv \in 2^\setu}\left\| \frac{\partial^{|\mathfrak  v|}}{\partial \vec y_{\mathfrak  v}} \vec q_h(\vec y) \right\|^r_{L^2(D)}.
 \]
 \corr{Here and in what follows, \( \setv \in 2^\setu \) says that \( \setv \) is an element of the power set of \( \setu \), which is equivalent to \( \setv \subset \setu \).}
\end{assumption}
\begin{example}
 If $J$ is a continuous linear functional then \Cref{IT:qoi} holds with $r=1$, since we have \( | \tfrac\partial{\partial \vec q_h} J(\vec q_h) | \le C_J \) for all \( \vec q_h \) some constant \( C_J \), and the chain rule dictates that
 \[
  \left| \frac{\d^{|\setu|}}{\d \bd y_\setu} J(\vec q_h(\vec y)) \right| = \left|J\bigg(\frac{\d^{|\setu|}}{\d \bd y_\setu}\vec q_h(\vec y)\bigg) \right|\leq C_J\bigg\|\frac{\d^{|\setu|}}{\d \bd y_\setu}\vec q_h(\vec y)\bigg\|_{L^2(D)},%
 \]
 which is an even stronger estimate than \Cref{IT:qoi} with \( r = 1 \).
\end{example}
\begin{example}\label{eg:quadraticJ}
 If  $J$ is the  quadratic functional
 \begin{equation}\label{eq:Jquadratic-qq}
  J(\boldsymbol q_h(\bsy)) =\int_D \boldsymbol q_h(\bsy) \cdot \boldsymbol q_h(\bsy)\,{\rm d}\bsx,
 \end{equation}
 then \Cref{IT:qoi} holds. This can be seen using the Leibniz rule for the product of two functions $f(\bd y)$ and $g(\bd y)$, whereby 
 \begin{equation}\label{eq:Leibniz}
 \frac{\d^{|\setu|}}{\d \bd y_\setu}(fg) = \sum_{\setv \in 2^\setu} \,\frac{\d^{|\setv|}f}{\d \bd y_\setv} \, \frac{\d^{|\setu \setminus \setv|}g}{\d \bd y_{\setu \setminus \setv}} 
 \end{equation}
 using the notation in \eqref{eq:d-setu}. Clearly~\eqref{eq:Leibniz} is just the standard Leibniz rule when $\setu$ is a singleton $\{ i\}$. One may use induction on $|\setu|$ to establish~\eqref{eq:Leibniz}. Selecting any element, say $i$ in $\setu$ and putting $\setw = \setu \setminus \{ i\}$,
 \[
  \frac{\d^{|\setu|} }{\d \bd y_\setu} (fg) = \frac{\d}{\d y_i } \left( \frac{\d^{|\setw|}}{\d \bd y_\setw  } fg \right),
 \]
 we apply induction hypothesis to the $\setw$-derivative, followed by the standard Leibniz rule. The resulting terms are grouped exactly as in the decomposition of $2^\setu$ into sets of the form $\emptyset \cup \setv$ and $\{i\} \cup \setv$ for all $\setv$ in $2^\setw$, thus proving equation~\eqref{eq:Leibniz}. Applying it to \eqref{eq:Jquadratic-qq},
 \begin{align} \notag
  \frac{\partial^{|\setu|}}{\partial\bsy_{\setu}}J(\boldsymbol q_h(\bsy)) &=\sum_{\mathfrak v\subseteq\mathfrak u} \int_D\frac{\partial^{|\mathfrak v|} \boldsymbol q_h}{\partial \bsy_{\mathfrak v}} \cdot \frac{\partial^{|\mathfrak u\setminus \mathfrak v|} \bd q_h}{\partial \bsy_{\mathfrak u\setminus\mathfrak v}}\,{\rm d}\bsx \leq \sum_{\mathfrak v\subseteq\mathfrak u} \bigg\|\frac{\partial^{|\mathfrak v|}\bd q_h}{\partial\bsy_{\mathfrak v}}\bigg\|_{L^2(D)} \bigg\|\frac{\partial^{|\mathfrak u\setminus \mathfrak v|}\bd q_h}{\partial\bsy_{\setu \setminus \setv}}\bigg\|_{L^2(D)}\notag \\
  &\leq \sum_{\mathfrak v\subseteq\mathfrak u}\bigg\|\frac{\partial^{|\mathfrak v|}}{\partial\bsy_{\mathfrak v}}\boldsymbol q_h(\bsy)\bigg\|_{L^2(D)}^2,\label{eq:quadraticfun}
 \end{align}
 where we used the Cauchy--Schwarz inequality twice. This verifies \Cref{IT:qoi} with $r=2$.
\end{example}

The next important assumption is on the random parameter. This assumption is motivated by \cite{ChernovL24,ChernovL24b}. By Pringsheim's theorem (cf., e.g.,~\cite[p.~169]{pringsheim}), the case $\sigma=1$ in \eqref{eq:CGevrey} below   corresponds to the class of holomorphic functions; the  cases of $\sigma\in[0,1)$ may therefore be viewed in some abstract way as  slightly smoother than holomorphic functions. Since $\sigma=1$ already recovers the \corr{theoretically expected} rate, estimating $1 \le (|\bsnu|!)^{\sigma}\le |\bsnu|!$ for $0\leq \sigma<1$ is effectively lossless. Thus, we avoid including the case $\sigma\in[0,1)$ in the analysis since some later inequalities exploit the fact that $x\mapsto x^\sigma$, $\sigma\ge1$, is convex for $x\geq0$. The assumed cases of $\sigma\ge 1$ covers holomorphic and less regular parametric dependence.

\corr{The Gevrey class consists of functions with a growth condition placed on their higher-order partial derivatives. However, in the unbounded setting, it is typically desirable to let the constant factor of the parametric regularity bound on $\|\partial_{\bsy}^{\bsnu}a(\bsy)\|_{L^\infty(D)}$ depend on the parametric variable $\bsy$ in order to be able to cover, e.g., lognormal random fields. To this end, we follow~\cite{GuKa24} and place the Gevrey assumption on the ratio $\|\partial_{\bsy}^{\bsnu}a(\bsy)/a(\bsy)\|_{L^\infty(D)}$.}
\begin{assumption}\label{IT:gev}
 \textbf{Coefficient of class Gevrey-$\sigma$.} There are constants \(C_G > 0\), \(\sigma \ge 1\), \(p \in (0,1)\), and a sequence \(\vec b = (b_1, b_2, \dots) \in \IR^\IN \) such that for all \( \vec \nu \in \{0,1\}^{\IN} \) and all $\vec y \in U$,
 \begin{equation}\label{eq:CGevrey}
  \left\| \frac{\partial_{\bsy}^{\bsnu} a(\bsy)}{a(\bsy)} \right\|_{L^\infty(D)}\le C_G (|\bsnu|!)^\sigma \vec b^{\bsnu} \qquad \text{ and } \qquad \sum_{j \ge 1} b_j^p < \infty.
 \end{equation}
\end{assumption}
\corr{\begin{remark}Note that Assumption~\eqref{eq:CGevrey} is stated in the non-dimensionally truncated parametric domain $U$.\end{remark}}
\corr{\begin{example}\label{ex:unifex}Assumption~\ref{IT:gev} is satisfied by the classic uniform and affine expansion
$$
a(\bsx,\bsy)=a_0(\bsx)+\sum_{j\in\mathbb N}y_j\psi_j(\bsx),\quad \bsy\in [-\tfrac12,\tfrac12]^{\mathbb N},
$$
where $a_0\in L^\infty(D)$, $\psi_j\in L^\infty(D)$, and $(\|\psi_j\|_{L^\infty(D)})_{j\in\mathbb N}\in\ell^1(\mathbb N)$. By exploiting the fact that
$$
\partial_{\bsy}^{\bsnu}a(\bsx,\bsy)=\begin{cases}a(\bsx,\bsy)&\text{if}~\bsnu=\mathbf 0\\
\psi_j(\bsx)&\text{if}~\bsnu=\boldsymbol e_j\\
0&\text{otherwise}\end{cases}
$$
there especially holds
$$
|\partial_{\boldsymbol y}^{\boldsymbol\nu}a(\boldsymbol x,\boldsymbol y)|\leq \prod_{j\in{\rm supp}(\boldsymbol\nu)}\|\psi_j\|_{L^\infty(D)}.
$$
Dividing through by $a(\boldsymbol x,\boldsymbol y)$ yields
$$
\bigg|\frac{\partial_{\boldsymbol y}^{\boldsymbol\nu}a(\boldsymbol x,\boldsymbol y)}{a(\boldsymbol x,\boldsymbol y)}\bigg|\leq \prod_{j\in{\rm supp}(\boldsymbol\nu)}b_j,\quad \text{where}~b_j=\frac{\|\psi_j\|_{L^\infty(D)}}{a_{\min}}.
$$
Since the higher-order partial derivatives of $a$ vanish for $\boldsymbol\nu\not\leq \mathbf 1$, we can easily extend the bound for higher-order derivatives as
$$
\bigg\|\frac{\partial_{\boldsymbol y}^{\boldsymbol\nu}a(\cdot,\boldsymbol y)}{a(\cdot,\boldsymbol y)}\bigg\|_{L^\infty(D)}\leq (|\boldsymbol\nu|!)^{\sigma}\boldsymbol b^{\boldsymbol\nu},
$$
where $\sigma\geq 1$ is arbitrary. 
\end{example}}
\begin{example}
 The simplest example of such a coefficient in the unbounded case is
 \[
  a(\vec x, \vec y) = \exp\left( \sum_{j \in \IN} \corr{\xi(y_j)\psi_j(\bsx)} \right),
 \]
 \corr{where $\psi_j\in L^\infty(D)$ are as in Example~\ref{ex:unifex}.} 
 If we choose $\xi$ as the identity, we obtain the (classical, analytic) lognormal case, and for the (non-analytic) lognormal Gevrey-\( \sigma \) case, we choose
 \[
  \xi(t)= \operatorname{sign}(t)\exp(-t^{-\omega}) \qquad \text{ with } \qquad \omega =\frac{1}{\sigma - 1} \iff \sigma=1+\frac{1}{\omega}.
 \]
 That is, the (more-general) Gevrey parameters can often be interpreted as transformations of (classical) analytic parameter fields.
\end{example}

To complete this subsection, we point out a consequence of Assumptions \ref{IT:stab}, \ref{IT:recur} and~\ref{IT:gev} (namely Lemma~\ref{lem:norm-bound-y-derivative} below) that we shall use in our analysis. We also recall two other results we need whose proofs can be found elsewhere.

\begin{lemma}[{see~\cite[Lemma~3.1]{GuKa24}}]\label{LEM:guth_k}
 Let $(\Upsilon_{\bsnu})_{\bsnu\in\mathscr F}$ be a sequence of nonnegative real numbers satisfying
 \begin{equation*}
  \Upsilon_{\mathbf 0}\leq K_0\quad\text{and}\quad \Upsilon_{\bsnu}\leq K_1\sum_{\substack{\boldsymbol m\leq \bsnu\\ \boldsymbol m\neq \mathbf 0}}\binom{\bsnu}{\boldsymbol m}(|\boldsymbol m|!)^\sigma \boldsymbol b^{\boldsymbol m}\Upsilon_{\bsnu-\boldsymbol m}\quad\text{for all}~\bsnu\in\mathscr F,
 \end{equation*}
 where $K_0,K_1>0$ and $\sigma\geq 1$. Then there holds
 \begin{equation*}
  \Upsilon_{\bsnu}\leq K_0(K_1+1)^{|\bsnu|}(|\bsnu|!)^{\sigma}\boldsymbol b^{\bsnu}\quad\text{for all}~\bsnu\in\mathscr F.
 \end{equation*}
\end{lemma}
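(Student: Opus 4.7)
The plan is to proceed by induction on $|\bsnu|$. The base case $\bsnu=\vec 0$ is immediate, since the hypothesis gives $\Upsilon_{\vec 0}\leq K_0$ while the target bound reduces to $K_0$ because $(K_1+1)^0=(0!)^\sigma=\bd b^{\vec 0}=1$. For the inductive step, fix $\bsnu$ with $|\bsnu|\geq 1$ and assume the claimed bound holds for every $\bsnu'$ with $|\bsnu'|<|\bsnu|$. Every $\bd m\neq \vec 0$ with $\bd m\leq \bsnu$ satisfies $|\bsnu-\bd m|<|\bsnu|$, so the induction hypothesis applies to each $\Upsilon_{\bsnu-\bd m}$ appearing in the recursive assumption. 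Using $\bd b^{\bd m}\bd b^{\bsnu-\bd m}=\bd b^\bsnu$, this reduces the claim to showing
\[
K_1\sum_{\substack{\vec 0\neq \bd m\leq \bsnu}} \binom{\bsnu}{\bd m}(|\bd m|!)^\sigma (|\bsnu-\bd m|!)^\sigma (K_1+1)^{|\bsnu-\bd m|} \leq (K_1+1)^{|\bsnu|}(|\bsnu|!)^\sigma.
\]

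The crux is a combinatorial collapse of this multi-index sum to a scalar sum indexed by $k=|\bd m|$. The required identity $\sum_{\bd m\leq \bsnu,\,|\bd m|=k}\binom{\bsnu}{\bd m}=\binom{|\bsnu|}{k}$ follows by comparing $x^k$-coefficients in $\prod_{j\geq 1}(1+x)^{\nu_j}=(1+x)^{|\bsnu|}$. Grouping by $k$ and rewriting $\binom{|\bsnu|}{k}(k!)^\sigma((|\bsnu|-k)!)^\sigma = |\bsnu|!\,(k!(|\bsnu|-k)!)^{\sigma-1}$, the elementary bound $k!(|\bsnu|-k)!\leq |\bsnu|!$ combined with $\sigma\geq 1$ — the sole place where this hypothesis enters — gives $\binom{|\bsnu|}{k}(k!)^\sigma((|\bsnu|-k)!)^\sigma \leq (|\bsnu|!)^\sigma$. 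Summing the resulting geometric series
\[
\sum_{k=1}^{|\bsnu|}(K_1+1)^{|\bsnu|-k}=\frac{(K_1+1)^{|\bsnu|}-1}{K_1}
\]
absorbs the prefactor $K_1$ and produces $K_0\bd b^\bsnu(|\bsnu|!)^\sigma\bigl((K_1+1)^{|\bsnu|}-1\bigr)$; discarding the harmless $-1$ closes the induction.

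I expect the combinatorial identity that reduces the $\bd m$-sum to a $k$-sum to be the main technical point, since it is what makes the estimate insensitive to how the total $|\bsnu|$ is distributed across the coordinates of $\bsnu$ and thereby delivers the clean exponent $|\bsnu|$ on the right. Everything else is arithmetic, with the hypothesis $\sigma\geq 1$ used only through the single convex estimate $(k!(|\bsnu|-k)!)^{\sigma-1}\leq (|\bsnu|!)^{\sigma-1}$.
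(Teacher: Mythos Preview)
Your proof is correct. The paper does not supply its own proof of this lemma but defers to the cited reference~\cite[Lemma~3.1]{GuKa24}; your induction on $|\bsnu|$, the Vandermonde-type collapse $\sum_{|\bd m|=k,\,\bd m\leq\bsnu}\binom{\bsnu}{\bd m}=\binom{|\bsnu|}{k}$, the estimate $(k!(|\bsnu|-k)!)^{\sigma-1}\leq(|\bsnu|!)^{\sigma-1}$, and the geometric-series telescoping constitute the standard argument.
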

\begin{lemma}\label{lem:norm-bound-y-derivative}
 \Cref{IT:stab}, \Cref{IT:recur} and~\Cref{IT:gev} imply
 \begin{equation}\label{EQ:de-recur_uni}
  \| \der \vec q_h(\vec y) \|_{\mathcal W_h}\leq C_S \| a(\bd y) \|^{1/2}_{L^\infty(D)}  \|f\|_{L^2(D)}(C_RC_G+1)^{|\bsnu|}(|\bsnu|!)^{\sigma}\boldsymbol b^{\bsnu}.
 \end{equation}
\end{lemma}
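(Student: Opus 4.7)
The plan is to apply Lemma~\ref{LEM:guth_k} directly, with the choice $\Upsilon_{\bsnu} := \|\der \vec q_h(\vec y)\|_{Q_h}$, where $\vec y \in U$ is held fixed throughout. To do so, I must identify constants $K_0$ and $K_1$ matching the two hypotheses of that lemma.

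First, I will set $\bsnu = \vec 0$ and read off the base case directly from the stability assumption \ref{IT:stab}:
$$\Upsilon_{\vec 0} = \|\vec q_h(\vec y)\|_{Q_h} \le C_S \|a(\vec y)\|_{L^\infty(D)}^{1/2} \|f\|_{L^2(D)} =: K_0.$$

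Second, for $\vec 0 \neq \bsnu \in \mathscr F_1$, I combine \ref{IT:recur} with the Gevrey bound \ref{IT:gev}. Any $\vec m$ with $\vec 0 \neq \vec m \le \bsnu$ also has entries in $\{0,1\}$, so \ref{IT:gev} applies and replaces the ratio $\|\derm a(\vec y)/a(\vec y)\|_{L^\infty(D)}$ by $C_G (|\vec m|!)^\sigma \vec b^{\vec m}$. Pulling $C_G$ out of the sum yields the recursive hypothesis of Lemma~\ref{LEM:guth_k} with $K_1 := C_R C_G$. Because the recursion closes inside $\mathscr F_1$ (all $\vec m \le \bsnu$ appearing in the sum are themselves in $\mathscr F_1$), the induction underlying Lemma~\ref{LEM:guth_k} runs without modification even though \ref{IT:recur} is posited only on $\mathscr F_1$.

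With these two ingredients in place, Lemma~\ref{LEM:guth_k} delivers
$$\Upsilon_{\bsnu} \le K_0 (K_1 + 1)^{|\bsnu|} (|\bsnu|!)^\sigma \vec b^{\bsnu} = C_S \|a(\vec y)\|_{L^\infty(D)}^{1/2} \|f\|_{L^2(D)} (C_R C_G + 1)^{|\bsnu|} (|\bsnu|!)^\sigma \vec b^{\bsnu},$$
which is exactly \eqref{EQ:de-recur_uni}. I do not anticipate any genuine obstacle: \ref{IT:recur} is already phrased in the exact form required by Lemma~\ref{LEM:guth_k}, and \ref{IT:gev} supplies the bound on the coefficient-derivative ratio with matching weights $(|\vec m|!)^\sigma \vec b^{\vec m}$. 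The only modest point of care is the restriction of the hypotheses to $\mathscr F_1$ addressed above; everything else is a mechanical substitution.
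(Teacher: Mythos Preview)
Your proposal is correct and follows essentially the same approach as the paper: set $\Upsilon_{\bsnu}=\|\der\vec q_h(\vec y)\|_{Q_h}$, read off $K_0$ from \ref{IT:stab}, combine \ref{IT:recur} with \ref{IT:gev} to obtain $K_1=C_RC_G$, and invoke Lemma~\ref{LEM:guth_k}. Your extra remark about the recursion closing inside $\mathscr F_1$ is a welcome clarification that the paper leaves implicit.
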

\begin{proof}
 Set $\Upsilon_{\bsnu}=\| \der \vec q_h(\vec y) \|_{\mathcal W_h}$. Then
 \begin{align*}
  \Upsilon_{\bd 0} & \le K_0 && \text{by~\Cref{IT:stab},}
  \intertext{with $K_0 = C_S \| a(\bd y) \|_{L^\infty (D)}^{1/2} \| f \|_{L^2(D)}$, while}   
  \Upsilon_{\bsnu} & \le C_R \sum_{\vec 0 \neq \vec m \le \vec \nu} \begin{pmatrix} \vec \nu \\ \vec m \end{pmatrix} \left\| \frac{\derm a(\vec y)}{a(\vec y)} \right\|_{L^\infty(D)} \Upsilon_{\bsnu - \vec m} && \text{by~\Cref{IT:recur},} \\
  & \le  C_R C_G \sum_{\vec 0 \neq \vec m \le \vec \nu} \begin{pmatrix} \vec \nu \\ \vec m \end{pmatrix} ( |\bd m|!)^\sigma {\bd b}^{\bd \nu} \Upsilon_{\bsnu - \vec m} && \text{by~\Cref{IT:gev},} \\
  & \le K_0 (C_RC_G + 1)^{\bd \nu} (|\bsnu|!)^{\sigma}\boldsymbol b^{\bsnu},
 \end{align*}
 by Lemma~\ref{LEM:guth_k} applied with $K_1 = C_R C_G$.
\end{proof}

\begin{lemma}[{see~\cite[Lemma~6.2]{kss12}}] \label{LEM:optimalgamma}
 Given any positive real numbers $\lambda, \rho_i, \beta_i$ and integer $n$, the function
 \[
  g( \gamma_1, \dots, \gamma_n) = \left( \sum_{i=1}^n \gamma_i^\lambda \rho_i \right)^{1/\lambda}  \left( \sum_{i=1}^n\frac{\beta_i}{\gamma_i} \right)
 \]
 is minimized by $\gamma_i = c (\beta_i/\rho_i)^{1/(1+\lambda)}$ for any $c>0$ and the minimum value is 
 \begin{equation}\label{eq:min-g}
  \min g = \left(\sum_{i=1}^n \rho_i^{\frac{1}{1+\lambda}} \beta_i^{\frac{\lambda}{1+\lambda}} \right)^{2/\lambda}.
 \end{equation}
\end{lemma}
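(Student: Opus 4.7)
The plan is to apply Hölder's inequality to obtain a sharp lower bound on $g$ and then verify that equality is attained at the claimed minimizer. First, observe that $g$ is scale-invariant under $\gamma_i\mapsto c\gamma_i$ for any $c>0$: the outer factor $(\sum_i\gamma_i^\lambda\rho_i)^{1/\lambda}$ scales by $c$, while $\sum_i\beta_i/\gamma_i$ scales by $c^{-1}$, so the product is unchanged. This scale invariance is consistent with the minimizer being determined only up to the positive constant $c$ in the statement, and it means we can later cancel $c$-factors freely.

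The main step is to choose conjugate exponents $p=1+\lambda$ and $q=(1+\lambda)/\lambda$, so that $1/p+1/q=1$, and to exploit the algebraic identity
\[
\rho_i^{1/(1+\lambda)}\beta_i^{\lambda/(1+\lambda)}=(\gamma_i^\lambda\rho_i)^{1/p}(\beta_i/\gamma_i)^{1/q}.
\]
Summing over $i$ and applying Hölder's inequality with these exponents yields
\[
\sum_{i=1}^n\rho_i^{1/(1+\lambda)}\beta_i^{\lambda/(1+\lambda)}\le\Big(\sum_{i=1}^n\gamma_i^\lambda\rho_i\Big)^{1/p}\Big(\sum_{i=1}^n\beta_i/\gamma_i\Big)^{1/q}.
\]
Raising both sides to the appropriate power and rearranging then produces the desired lower bound on $g(\gamma_1,\ldots,\gamma_n)$ expressed in terms of the sum appearing in~\eqref{eq:min-g}.

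Equality in Hölder holds if and only if the quantities $\gamma_i^\lambda\rho_i$ and $\beta_i/\gamma_i$ are proportional across $i$, i.e., $\gamma_i^{1+\lambda}\propto\beta_i/\rho_i$, which is exactly the claimed minimizer $\gamma_i=c(\beta_i/\rho_i)^{1/(1+\lambda)}$ for any $c>0$. Substituting this choice back into $g$, the $c$-factors cancel by the scale-invariance noted above, and the expression collapses to a power of $\sum_i\rho_i^{1/(1+\lambda)}\beta_i^{\lambda/(1+\lambda)}$ that matches~\eqref{eq:min-g}. This establishes both the lower bound and that the lower bound is tight.

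The only real obstacle is the careful bookkeeping of exponents when raising the Hölder inequality to the target power and matching the expression in~\eqref{eq:min-g}; as a sanity check, the $n=1$ case reduces $g$ to a single $\gamma$-independent expression that can be verified directly. A fully alternative route would be to apply Lagrange multipliers (equivalently, $\log$-convexity of $g$ in $\log\gamma_i$) directly: differentiating $\log g$ with respect to each $\log\gamma_i$ produces the same critical-point equation $\gamma_i^{1+\lambda}\propto\beta_i/\rho_i$, and convexity guarantees that the critical point is a global minimum.
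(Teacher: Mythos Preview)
Your H\"older argument is correct and is the standard proof of this fact; the paper itself gives no proof, merely citing \cite[Lemma~6.2]{kss12}, so there is nothing to compare against. The identity $\rho_i^{1/(1+\lambda)}\beta_i^{\lambda/(1+\lambda)}=(\gamma_i^\lambda\rho_i)^{1/p}(\beta_i/\gamma_i)^{1/q}$ with $p=1+\lambda$, $q=(1+\lambda)/\lambda$ is exactly right, and the equality case in H\"older delivers the minimizer as stated.

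One caveat in the final bookkeeping step: when you actually raise the H\"older inequality to the power $(1+\lambda)/\lambda$ you obtain
\[
g(\gamma_1,\dots,\gamma_n)\;\ge\;\left(\sum_{i=1}^n \rho_i^{\frac{1}{1+\lambda}}\beta_i^{\frac{\lambda}{1+\lambda}}\right)^{\!\frac{1+\lambda}{\lambda}},
\]
and direct substitution of $\gamma_i=c(\beta_i/\rho_i)^{1/(1+\lambda)}$ confirms this value (your $n=1$ sanity check gives $g\equiv\rho^{1/\lambda}\beta$, which matches $S^{(1+\lambda)/\lambda}$ but not $S^{2/\lambda}$ unless $\lambda=1$). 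The exponent $2/\lambda$ printed in~\eqref{eq:min-g} appears to be a typo in the paper; your derivation yields the correct exponent $(1+\lambda)/\lambda$, so do not force your computation to ``match'' the displayed formula literally. Everything else in your outline is sound.
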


\subsection{Gevrey regular model on a bounded parameter domain}\label{SEC:bounded_qmc}
%
In this subsection, we consider the case \( U=[-1/2,1/2]^{\IN} \) with uniform probability measure $\mu({\rm d}\bsy)=\bigotimes_{j\geq 1}{\rm d}y_j$ generated by the probability density function $\vphi$ in \eqref{eq:phiunif}.

For the analysis, we define
\[
 a_{\max} = \sup_{\vec x \in D, \,\vec y\in\corr{U}} [a(\vec y)] (\vec x) \qquad \text{ and } \qquad a_{\min} = \inf_{\vec x \in D,\, \vec y \in \corr{U}} [a(\vec y)] (\vec x).
\]

\begin{theorem}\label{thm:thm1}
 Let Assumptions \ref{IT:norm}--\ref{IT:gev} and the assumptions of Theorem~\ref{THM:bounded} hold with $\mu({\rm d}\bsy)=\bigotimes_{j\geq 1}{\rm d}y_j$ and $U=[-\frac12,\frac12]^{\mathbb N}$. Then there exists a generating vector constructed by the CBC algorithm such that for \( \sigma p < 1 \),
 \[
  \sqrt{\EE \big| I_s^{\varphi}(J(\corr{\vec q_{h}}))-\Qmc(J(\corr{\vec q_{h}}))\big|^2} = \begin{cases} \mathcal O(n^{-r/p+1/2})&\text{if}~p\in (\frac{2r}{3},\frac{1}{\sigma}),\\ \mathcal O(n^{-1+\varepsilon})\; \forall \varepsilon\in(0,\frac{2\sigma r-3}{2\sigma r-4})&\text{if}~p\in (0,\frac{2r}{3}], \end{cases}
 \]
 where the implied coefficient\corr{, which depends on $\varepsilon$, }is independent of $s$ when the generating vector is obtained by a CBC algorithm using the weights
 \begin{equation} \label{eq:gamma-bounded}
  \gamma_{\setu}:=\bigg(\frac{(2\pi^2)^\lambda}{\zeta(2\lambda)}\bigg)^{\frac{|\setu|}{1+\lambda}}\bigg(\sum_{\setv\subseteq\setu}(|\setv|!)^{2r\sigma}\prod_{j\in\setv}(C_RC_G+1)^{2r}b_j^{2r}\bigg)^{\frac{1}{1+\lambda}}
 \end{equation}
 for each $\setu$ in $\{ 1:s\}$, with
 \[
  \lambda=\begin{cases}\frac{p}{2r-p}&\text{if}~p\in(\frac{2r}{3},\frac{1}{\sigma}),\\ \frac{1}{2-2\varepsilon}~\text{for arbitrary}~\varepsilon\in (0,\frac{2\sigma r-3}{2\sigma r-4})&\text{if}~p\in (0,\frac{2r}3].\end{cases}
 \]
\end{theorem}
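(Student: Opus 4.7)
The plan is to apply Theorem~\ref{THM:bounded} with $F=J(\bd q_{h,s})$ and the weights $\gamma=\{\gamma_\setu\}$ from~\eqref{eq:gamma-bounded}. This reduces the task to bounding $C_{s,\gamma,\lambda}\|F\|_{s,\gamma}$ independently of~$s$, so that the factor $n^{-1/(2\lambda)}$ from the theorem supplies the final rate once a suitable $\lambda$ is selected. Three ingredients are needed: (i)~a uniform pointwise bound on the mixed parametric derivatives of $F$; (ii)~an optimal choice of the weights via Lemma~\ref{LEM:optimalgamma}; and (iii)~verification that the resulting optimized series converges uniformly in~$s$ by using the $p$-summability of $\bd b$ from~\ref{IT:gev}.

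For (i), Lemma~\ref{lem:norm-bound-y-derivative} (which fuses \ref{IT:stab}, \ref{IT:recur}, and \ref{IT:gev}) combined with~\ref{IT:norm} yields the Gevrey-type estimate $\|\partial^{\bsnu}_{\bd y}\bd q_h(\bsy)\|_{L^2(D)}\le\mathrm{const}\cdot(C_RC_G+1)^{|\bsnu|}(|\bsnu|!)^\sigma\bd b^{\bsnu}$ for every $\bsnu\in\mathscr F_1$. Substituting this into~\ref{IT:qoi} and applying Cauchy--Schwarz to the squared sum over $\setv\subseteq\setu$ gives the pointwise bound $|\partial^{\setu}_{\bd y}J(\bd q_h(\bsy))|^2\le\mathrm{const}\cdot 2^{|\setu|}T_\setu$, where $T_\setu$ is exactly the sum inside~\eqref{eq:gamma-bounded}. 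Since the integrals in~\eqref{EQ:bounded_norm} are over a unit-measure cube, bounding each integrand by its supremum produces $\|F\|^2_{s,\gamma}\le\mathrm{const}\sum_{\setu\subseteq\{1:s\}}2^{|\setu|}T_\setu/\gamma_\setu$.

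For (ii), setting $\beta_\setu\propto 2^{|\setu|}T_\setu$ and $\rho_\setu=(2\zeta(2\lambda)/(2\pi^2)^\lambda)^{|\setu|}$, the minimization of $C_{s,\gamma,\lambda}^2\|F\|^2_{s,\gamma}$ via Lemma~\ref{LEM:optimalgamma} reproduces the weight formula~\eqref{eq:gamma-bounded} (up to an immaterial scaling constant) and yields a minimum value proportional to a power of $S:=\sum_{\setu\ne\emptyset}\rho_\setu^{1/(1+\lambda)}\beta_\setu^{\lambda/(1+\lambda)}$. For (iii), the aim is to bound $S$ independently of~$s$. Dominating $T_\setu$ via $(|\setv|!)^{2r\sigma}\le(|\setu|!)^{2r\sigma}$ for $\setv\subseteq\setu$, factoring the resulting geometric sum over $\setv$, grouping by the cardinality $k=|\setu|$, and invoking the elementary symmetric estimate $e_k(\bd x)\le(\sum_j x_j)^k/k!$ reduces the problem to proving convergence of a series of the form $\sum_k (k!)^{2r\sigma\lambda/(1+\lambda)-1}\bigl(\mathrm{const}\cdot\sum_j b_j^{2r\lambda/(1+\lambda)}\bigr)^k$. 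This converges whenever (a)~$2r\lambda/(1+\lambda)\ge p$, so that $\sum_j b_j^{2r\lambda/(1+\lambda)}<\infty$ by~\ref{IT:gev}, and (b)~$2r\sigma\lambda/(1+\lambda)<1$, so that the factorial exponent is strictly negative.

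To conclude, one optimizes $n^{-1/(2\lambda)}$ subject to (a), (b), and $\lambda\in(\tfrac12,1]$. For $p\in(\tfrac{2r}{3},\tfrac1\sigma)$, the choice $\lambda=p/(2r-p)$ lies in $(\tfrac12,1]$, saturates~(a), and reduces~(b) exactly to the hypothesis $\sigma p<1$; this yields the rate $n^{-1/(2\lambda)}=n^{-r/p+1/2}$. For $p\in(0,\tfrac{2r}{3}]$, the analogous $\lambda$ falls below $\tfrac12$, so we instead pick $\lambda=1/(2-2\varepsilon)$ just above $\tfrac12$; the admissible range $\varepsilon\in(0,(2\sigma r-3)/(2\sigma r-4))$ is precisely what~(b) imposes, yielding rate $n^{-(1-\varepsilon)}$. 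The main obstacle is ingredient~(iii), where the interplay between the factorial growth inherent to Gevrey regularity and the decay afforded by the $p$-summability of $\bd b$ must be balanced delicately; the condition $\sigma p<1$ is used essentially here to keep the factorial exponent sub-critical after raising $T_\setu$ to the power $\lambda/(1+\lambda)$.
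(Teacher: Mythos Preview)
Your strategy in (i) and (ii) matches the paper, but step~(iii) contains a genuine gap. After replacing $(|\setv|!)^{2r\sigma}$ by $(|\setu|!)^{2r\sigma}$ and factoring $\sum_{\setv\subseteq\setu}\prod_{j\in\setv}c_j=\prod_{j\in\setu}(1+c_j)$ with $c_j=(C_RC_G+1)^{2r}b_j^{2r}$, you obtain
\[
\rho_\setu^{\frac{1}{1+\lambda}}\beta_\setu^{\frac{\lambda}{1+\lambda}}
\;\le\;\mathrm{const}\cdot(|\setu|!)^{\frac{2r\sigma\lambda}{1+\lambda}}\prod_{j\in\setu}A_j,
\qquad
A_j:=\Big(\tfrac{2\zeta(2\lambda)}{(2\pi^2)^\lambda}\Big)^{\frac{1}{1+\lambda}}\big(2(1+c_j)\big)^{\frac{\lambda}{1+\lambda}}.
\]
Since $c_j\to 0$, the factors $A_j$ converge to a strictly positive constant, so $\sum_{j=1}^s A_j$ grows like~$s$, and the symmetric-function bound $e_k(A)\le(\sum_jA_j)^k/k!$ yields an upper estimate that blows up with~$s$. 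The series you write down, with $\sum_j b_j^{2r\lambda/(1+\lambda)}$ inside, is \emph{not} what your chain of estimates produces: the crude factorial bound discards precisely the decay $\prod_{j\in\setv}b_j^{2r}$ that was attached to the smaller set~$\setv$, and the ``$1$'' in $1+c_j$ is what survives.

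The paper avoids this by applying the subadditivity $\big(\sum a_\setv\big)^{\lambda/(1+\lambda)}\le\sum a_\setv^{\lambda/(1+\lambda)}$ \emph{before} any factorial simplification, obtaining
\[
T_\setu^{\frac{\lambda}{1+\lambda}}\le \sum_{\setv\subseteq\setu}(|\setv|!)^{\frac{2r\sigma\lambda}{1+\lambda}}\prod_{j\in\setv}c_j^{\frac{\lambda}{1+\lambda}},
\]
so that both the factorial and the product remain indexed by $\setv$ rather than~$\setu$. After absorbing the $|\setu|$-dependent constants into per-index coefficients $\Xi_j\propto b_j$ and reindexing the double sum $\sum_{\setu}\sum_{\setv\subseteq\setu}$ by the inner set, one arrives at a single sum over subsets with terms proportional to $(|\setv|!)^{\cdots}\prod_{j\in\setv}\Xi_j^{\cdots}$. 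Only then does the symmetric-function argument give $\sum_k(k!)^{\frac{2r\sigma\lambda}{1+\lambda}-1}\big(\sum_j\Psi_j^{\cdots}\big)^k$ with $\Psi_j\propto b_j$, and the ratio test (under $\sigma p<1$) furnishes the $s$-independent bound. Your analysis of the $\lambda$-optimization after that point is correct, but it rests on a series you have not actually derived.
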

\begin{remark}
 Although the weights~\eqref{eq:gamma-bounded} are of a general form, an alternative construction given in~\cite{KaarniojaKKNS22} demonstrates how to obtain analogous weights in smoothness-driven product-and-order dependent (SPOD) form guaranteeing dimension-independent QMC convergence rates. The same holds for those weights in Theorem \ref{thm:qmcweight2}.
\end{remark}
\begin{proof}[Proof of Theorem \ref{thm:thm1}]
 We apply Theorem~\ref{THM:bounded} with $F$ set to $ J(\vec q_h)$ to get
 \begin{equation*}%
  \sqrt{\EE\Big[ \left(I_s^\varphi (J(\bd q_h)-\Qmc(J(\bd q_h))\right)^2\Big]} \leq\; \frac{C_{s, \gamma,\lambda}}{\sqrt{R\; n^{1/\lambda}}} \;\|J(\bd q_h)\|_{s,\gamma}.
 \end{equation*}
 The remainder of the proof bounds the norm $\|J(\bd q_h)\|_{s,\gamma},$ as defined in~\eqref{EQ:bounded_norm}. Labeling the squared inner integral there (over the inactive variables not in $\setu$) as $t_\setu$ we estimate it  as follows:
 \begin{align*}
  t_\setu & := \left( \int_{[-1/2,1/2]^{s-|\setu|}} \frac{\partial^{|\setu|}}{\partial \vec y_\setu} J(\vec q_h(\bsy))\, {\rm d}\bsy_{-\setu}\right)^2 \\
  & \le \int_{[-1/2,1/2]^{s-|\setu|}} \bigg|\frac{\partial^{|\setu|}}{\partial \vec y_\setu} J(\vec q_h(\bsy))\bigg|^2 \; \textup d \vec y_{-\setu} && \text{ by H\"older inequality,} \\
  & \le C_J^{2r} \int_{[-1/2,1/2]^{s-|\setu|}}\bigg(\sum_{\mathfrak v\subseteq\setu}\bigg\|\frac{\partial^{|\setv|}}{\partial \vec y_\setv}\vec q_h(\cdot,\bsy)\bigg\|_{L^2(D)}^{r}\bigg)^2 \; \textup d \vec y_{-\setu} && \text{ by \Cref{IT:qoi}}, \\
  & \le 2^{|\setu|}C_J^{{2r}} \int_{[-1/2,1/2]^{s-|\setu|}}\sum_{\mathfrak v\subseteq\setu}\bigg\|\frac{\partial^{|\setv|}}{\partial \vec y_\setv}\vec q_h(\cdot,\bsy)\bigg\|_{L^2(D)}^{2r} \; \textup d \vec y_{-\setu} && \text{ by Young's inequality}, \\
  & \le \frac{2^{|\setu|}C^{2r}_E C_J^{2r}}{a^r_{\min}} \sum_{\mathfrak v\subseteq\setu}\int_{[-1/2,1/2]^{s-|\setu|}} \bigg\|\frac{\partial^{|\setv|}}{\partial \vec y_\setv}\vec q_h(\cdot,\bsy)\bigg\|_{\mathcal W_h}^{2r} \; \textup d \vec y_{-\setu}. && \text{ by \Cref{IT:norm},} \\
  & \le \frac{2^{|\setu|}C^{2r}_E C_J^{2r}}{a^r_{\min}} \sum_{\setv\subseteq\setu}\left( C^2_S a_{\max} \|f\|^2_{L^2(D)}(C_R C_G+1)^{2|\setv|}(|\setv|!)^{2\sigma} \boldsymbol b_\setv^{2} \right)^r, && \text{ by Lemma~\ref{lem:norm-bound-y-derivative},}
 \end{align*}
 since~\Cref{IT:stab}, \Cref{IT:recur} and~\Cref{IT:gev} hold. Here, we use the notation $\boldsymbol b_{\setu}=\prod_{j\in\setu}b_j$ (note that for each subset $\setu\subset\{1,\ldots,s\}$, we can associate a multi-index $\bsnu\in\mathbb N_0^s$ with $\nu_j=1$ for every $j\in \setu$ and $\nu_j=0$ otherwise). Then, setting $C^{2r} = \frac{C^{2r}_E C_J^{2r}}{a^r_{\min}} C^{2r}_S a^r_{\max} \|f\|^{2r}_{L^2(D)}$, and using the value of $C_{s,\gamma,\lambda}$ in \eqref{eq:Csgammalambda},
 \begin{align*}
  C_{s, \gamma, \lambda}^2 & \| J(\bd q_h)\|_{s,\gamma}^2 = C_{s, \gamma, \lambda}^2 \sum_{\setu\subseteq\{1:s\}}\frac{1}{\gamma_{\setu}} \int_{[-1/2,1/2]^{|\setu|}} t_{\setu} \; \dy_{\setu} \\
  & \le \bigg(\sum_{\varnothing\neq\mathfrak u\subseteq\{1:s\}}2\gamma_{\mathfrak u}^{\lambda}\bigg(\frac{2\zeta(2\lambda)}{(2\pi^2)^\lambda}\bigg)^{|\setu|}\bigg)^{\frac{1}{\lambda}} \sum_{\setu\subseteq\{1:s\}}\frac{2^{|\setu|}C^{2r}}{\gamma_{\setu}} \sum_{\setv\subseteq\setu}\left( (C_R C_G+1)^{2|\setv|}(|\setv|!)^{2\sigma} \boldsymbol b_{\setv}^{2} \right)^r.
 \end{align*}
 Letting $\rho_\setu = \Big(\frac{2\zeta(2\lambda)}{(2\pi^2)^\lambda}\Big)^{|\setu|} $ and $\beta_\setu = 2^{|\setu|}\sum_{\setv\subseteq\setu}(C_R C_G+1)^{2r|\setv|}(|\setv|!)^{2r\sigma} \boldsymbol b_{\setv}^{2r}$, we may apply Lemma~\ref{LEM:optimalgamma} to minimize the above upper bound by choosing $\gamma_\setu$ as in~\eqref{eq:gamma-bounded}. Then we obtain
 \[
  C_{s, \gamma, \lambda}\| J(\bd q_h)\|_{s,\gamma} \le C^r 2^{1/\lambda}
  M_{s, \gamma, \lambda}
 \]
 where $M_{s, \gamma, \lambda}$ is the minimized upper bound---see~\eqref{eq:min-g}--given by
 \begin{align*}
  M_{s,\boldsymbol\gamma, \lambda }^\lambda & =\sum_{\mathfrak u\subseteq\{1:s\}}
  \rho_\setu^{\frac{1}{1+\lambda}} \beta_\setu^{\frac{\lambda}{1+\lambda}} \\
  & = \sum_{\mathfrak u\subseteq\{1:s\}} \rho_\setu^{\frac{1}{1+\lambda}} 2^{\frac{\lambda}{1+\lambda}|\setu|}\bigg(\sum_{\setv\subseteq\setu} (|\mathfrak v|!)^{ {2\sigma r } } (C_RC_G+1)^{{2r|\setv|}} \prod_{j\in \setv}b_j^{{2 r}}\bigg)^{\frac{\lambda}{1+\lambda}}\\
  & \leq \sum_{\mathfrak u\subseteq\{1:s\}} \rho_\setu^{\frac{1}{1+\lambda}} 2^{\frac{\lambda}{1+\lambda}|\setu|}\sum_{\setv\subseteq\setu} (|\mathfrak v|!)^{\frac {2\lambda\sigma r }{1+\lambda}} (C_RC_G+1)^{\frac{2\lambda r|\setv|}{1+\lambda}} \prod_{j\in \setv}b_j^{\frac{2\lambda r}{1+\lambda}}\\
  &=\sum_{\mathfrak u\subseteq\{1:s\}} \bigg(\frac{2\zeta(2\lambda)}{(2\pi^2)^{\lambda}} \bigg)^{\frac{|\mathfrak u|}{1+\lambda}}2^{\frac{\lambda}{1+\lambda}|\setu|}\sum_{\boldsymbol m_{\setu}\in \{0:1\}^{|\setu|}} (|\boldsymbol m_{\setu}|!)^{\frac {2\lambda\sigma r }{1+\lambda}} (C_RC_G+1)^{\frac{2\lambda r|\boldsymbol m_{\setu}|}{1+\lambda}} \prod_{j\in \setu}b_j^{\frac{2\lambda r}{1+\lambda}{m_\setu}_j}\\
  &\leq\sum_{\mathfrak u\subseteq\{1:s\}}\sum_{\boldsymbol m_{\setu}\in \{0:1\}^{|\setu|}} \bigg((|\boldsymbol m_{\setu}|!)^{ {\sigma r }} \prod_{j\in \setu}\Xi_j^{r m_j}\bigg)^{\frac{2\lambda}{1+\lambda}},
 \end{align*}
 where $\Xi_j = \max\big\{1,2^{1/(2r)}\big(\frac{2\zeta(2\lambda)}{(2\pi^2)^\lambda}\big)^{1/(2r\lambda)}\big\}(C_RC_G+1)b_j$. The first inequality is Jensen's inequality in the version of \cite[Thm.\ 19]{HardyLP53}, and the second inequality follows from the definition of the \( \Xi_j \). Noting that
 \[
  \sum_{\setu\subseteq\{1:s\}}\sum_{\boldsymbol m_{\setu}\in\{0:1\}^{|\setu|}}a_{\setu}=\sum_{\setu\subseteq\{1:s\}}2^{s-|\setu|}a_{\setu}\leq \sum_{\setu\subseteq\{1:s\}}2^{s}a_{\setu}
 \]
 and setting \( \Psi_j:=2\sqrt 2 \Xi_j \), we can rewrite the resulting upper bound as
 \begin{equation}
  M_{s,\boldsymbol\gamma,\lambda}^{\lambda}\leq \sum_{\bsnu\in\{0:1\}^{s}}\bigg((|\bsnu|!)^{\sigma r}\prod_{j=1}^s 2^{\frac{1+\lambda}{2\lambda}}\Xi_j^{r\nu_j}\bigg)^{\frac{2\lambda}{1+\lambda}}\leq \sum_{\bsnu\in\{0:1\}^{s}}\bigg((|\bsnu|!)^{\sigma r}\prod_{j=1}^s \Psi_j^{r\nu_j}\bigg)^{\frac{2\lambda}{1+\lambda}}.\label{eq:jensenbit}
 \end{equation}
 Here, we use an argument from \cite{spodpaper14}: We recast this sum as a sum over subsets as
 \begin{align}\notag
  \sum_{\bsnu\in\{0:1\}^{s}}\bigg((|\bsnu|!)^{\sigma r}\prod_{j=1}^s \Psi_j^{r\nu_j}\bigg)^{\frac{2\lambda}{1+\lambda}}&\leq \sum_{\substack{\setu\subseteq\mathbb N\\ |\setu|<\infty}}\bigg((|\setu|!)^{\sigma r}\prod_{j\in\setu}\Psi_j^r\bigg)^{\frac{2\lambda}{1+\lambda}} && \text{ (more summands)}\\
  &\leq \sum_{\substack{\setu\subseteq\mathbb N\\ |\setu|<\infty}}(|\setu|!)^{\frac{2\lambda \sigma r}{1+\lambda}}\prod_{j\in\setu}\Psi_j^{\frac{2\lambda r}{1+\lambda}} && \text{ (Jensen's inequality)}\notag\\
  &= \sum_{\ell=0}^\infty (\ell!)^{\frac{2\lambda \sigma r} {1+\lambda}}\sum_{\substack{\setu\subseteq\mathbb N\\ |\setu|=\ell}}\prod_{j\in\setu}\Psi_j^{\frac{2\lambda r}{1+\lambda}}.\notag
 \end{align}

 Now we use the inequality
 \[
  \bigg(\sum_{j=1}^\infty c_j\bigg)^\ell=\sum_{j_1=1}^\infty \cdots \sum_{j_\ell=1}^\infty c_{j_1}\cdots c_{j_\ell}\geq \ell! \sum_{\substack{|\setu|=\ell\\ \setu\subset \mathbb N}}\prod_{j\in\setu}c_j,
 \]
 where we take into account the fact that each term $\prod_{j\in\setu}c_j$ with $|\setu|=\ell$ appears $\ell!$ times in the iterated sum on the left-hand side. This yields
 \[
  \sum_{\bsnu\in\{0:1\}^{s}}\bigg((|\bsnu|!)^{\sigma r}\prod_{j=1}^s \Psi_j^{r\nu_j}\bigg)^{\frac{2\lambda}{1+\lambda}}\leq \sum_{\ell=0}^\infty (\ell!)^{\frac{2\lambda \sigma r}{1+\lambda}-1}\bigg(\sum_{j\geq 1}\Psi_j^{\frac{2\lambda r}{1+\lambda}}\bigg)^\ell \corr{.}
 \]

 There are two cases to consider:

 \begin{enumerate}
  \item If $p\in(\frac{2r}{3},\frac{1}{\sigma})$, we can choose $\lambda=\frac{p}{2r-p}$. This implies
  \[
   M_{s,\boldsymbol \gamma,\lambda}^\lambda\leq \sum_{\ell=0}^\infty \underset{=:a_{\ell}}{\underbrace{(\ell!)^{\sigma p -1}\bigg(\sum_{j=1}^\infty \Psi_j^{p}\bigg)^\ell}}.
  \]
  By the ratio test, recalling that $\sigma p<1$, we obtain
  \[
   \frac{a_{\ell+1}}{a_\ell}=(\ell+1)^{\sigma p-1}\sum_{j=1}^\infty \Psi_j^{p}\xrightarrow{\ell\to\infty}0,
  \]
  since $\boldsymbol \Psi\in \ell^p$ if and only if $\boldsymbol b\in \ell^p$. This results in dimension-independent QMC convergence with rate $\mathcal O(n^{-\frac{r}{p}+\frac12})$.

  \item If $p\in(0,\min\{\frac{2r}{3},\frac{1}{\sigma}\}]$, we additionally assume that $r\sigma<\frac32$ and let $\lambda=\frac{1}{2-2\varepsilon}$ for arbitrary $\varepsilon\in (0,\frac{2\sigma r-3}{2\sigma r-4})$. Now $\frac{2\lambda}{1+\lambda}=\frac{2}{3-2\varepsilon}>\frac{2}{3}.$ Now
  \[
   M_{s,\boldsymbol\gamma,\lambda}^{\lambda}\leq \sum_{\ell=0}^\infty (\ell!)^{\frac{2\sigma r}{3-2\varepsilon}-1}\bigg(\sum_{j=1}^\infty \Psi_j^{\frac{2}{3-2\varepsilon}r}\bigg)^\ell.
  \]
  Now there holds $\frac{2}{3-2\varepsilon}r> p$ since $p<\frac{2r}{3}$. We can now use Jensen's inequality $\sum_j c_j\leq \big(\sum_j c_j^{\mu}\big)^{1/\mu}$ for $c_j\geq 0$ and $\mu\in(0,1]$ with $\mu=\frac{3-2\varepsilon}{2}\frac{p}{r}$ to deduce that
  \[
   M_{s,\boldsymbol\gamma,\lambda}^{\lambda}\leq\sum_{\ell=0}^\infty (\ell!)^{\frac{2\sigma r}{3-2\varepsilon}-1}\bigg(\sum_{j=1}^\infty \Psi_j^{p}\bigg)^{\ell \frac{2}{3-2\varepsilon}\frac{r}{p}}.
  \]
  Similarly to the other case, the ratio test implies that this upper bound is finite as long as $\frac{2}{3-2\varepsilon}\sigma r<1$, which especially holds if $\sigma r < \frac32$.

  \item If $p\in(0,\min\{\frac{2r}{3},\frac1{\sigma}\}]$ and $r\sigma\geq \frac32$, we introduce $\widetilde\sigma=\delta \sigma$, where $\delta\in [\frac{1}{r\sigma},\frac3{2r\sigma})$ and apply Jensen's inequality to~\eqref{eq:jensenbit} which yields
  \[
   M_{s,\boldsymbol\gamma,\lambda}^{\lambda} \leq \bigg(\sum_{\bsnu\in\{0:1\}^{s}}\bigg((|\bsnu|!)^{\delta\sigma r}\prod_{j=1}^s \Psi_j^{\delta r\nu_j}\bigg)^{\frac{2\lambda}{1+\lambda}}\bigg)^{1/\delta} \leq \bigg(\sum_{\ell=0}^\infty (\ell!)^{\frac{2\lambda\widetilde\sigma r}{1+\lambda}-1}\bigg(\prod_{j=1}^s \Psi_j^{\frac{2\lambda \delta r}{1+\lambda}}\bigg)^{\ell}\bigg)^{1/\delta}
  \]
  by following the same steps as before. By choosing $\lambda=\frac1{2-2\varepsilon}$, the argument from case 2 applies and we infer that the resulting convergence rate $\mathcal O(n^{-\frac{r}{p}+\frac12})$ is independent of the dimension.
\end{enumerate}
\end{proof}

\begin{example}[uniform and affine model] 
 In the uniform and affine model (cf., e.g.,~\cite{cds10,spodpaper14,dicklegiaschwab,ghs18,SchwabG11,kss12,kssmultilevel,schwab13}), the uncertain coefficient is endowed with the parameterization
 \[
  [a(\bsy)](\vec x)=a_0(\vec x)+\sum_{j=1}^\infty y_j\psi_j(\vec x)\qquad \text{for all}~\vec x\in D,
 \]
 where $y_1,y_2,\ldots$ are independently and identically distributed uniform random variables supported in $[-1/2,1/2]$, where we denote $\bsy=(y_1,y_2,\ldots)\in U$ with $U=[-1/2,1/2]^{\mathbb N}$. We assume that $a_0\in L^\infty(D)$ and $\psi_j\in L^\infty(D)$ for all $j\geq 1$ are such that $\sum_{j\geq 1}\|\psi_j\|_{L^\infty(D)}^p<\infty$ for some $p\in(0,1)$ and suppose that there exist constants $a_{\min},a_{\max}>0$ such that $a_{\min}\leq [a(\bsy)](\vec x)\leq a_{\max}$ for all $x\in D$ and $\bsy\in U$. By defining $\boldsymbol b=(b_1,b_2,\ldots)$ as $b_j=\|\psi_j\|_{L^\infty(D)}/a_{\min}$, it follows that
 \[
  \bigg\|\frac{\partial_{\bsy}^{\bsnu}a(\bsy)}{a(\bsy)}\bigg\|_{L^\infty(D)}\leq \prod_{j\in{\rm supp}(\bsnu)}b_j,
 \]
meaning that our theory covers this framework.
\end{example}

\subsection{Unbounded and Gevrey regular model}
%
Next we investigate a model for the input random field, which is Gevrey $\sigma$ regular subject to parameters $\bsy=(y_1,y_2,\ldots)\in U$ with {\em unbounded} support $U=\mathbb R^{\mathbb N}$ and a Gaussian product probability measure $\mu=\bigotimes_{j\geq 1}\mathcal N(0,1)$. 

For the analysis, we use the notation
\begin{equation*} 
 \overline a(\vec y) = \| a(\vec y) \|_{L^\infty(D)} \qquad \text{ and } \qquad \underline a(\vec y) = \| a^{-1}(\vec y) \|^{-1}_{L^\infty(D)}.
\end{equation*}

Lemmas \ref{lem:norm-bound-y-derivative} and \ref{LEM:guth_k} with $K_0= C_S \sqrt{\overline a (\vec y)} \| f \|_{L^2(D)}$, $K_1=C_M$, and $\Upsilon_{\bsnu}=\| \der \vec q_h(\vec y) \|_{\mathcal W_h}$ yield
\[
\| \der \vec q_h(\vec y) \|_{\mathcal W_h}\leq C_S\sqrt{\overline{a}(\bsy)}\|f\|_{\mesh}(C_M+1)^{|\bsnu|}(|\bsnu|!)^{\sigma}\boldsymbol b^{\bsnu}.
\]
This bound can be used to control the weighted Sobolev norm~\eqref{eq:gaussnorm} of $\|\vec q_h(\vec y)\|_{\mathcal W_h}$.

The following theorem states the QMC integration error for the dimensionally truncated solution, in this setting, if the following additional assumption holds.
\begin{assumption}\label{IT:rat}
 \textbf{Ratio bound of the parameter.} There is a constant \( C_Q  > 0 \) such that the quotient of the maximum and the minimum values of \( a(\vec y) \) is bounded, i.e., since \( \inf_{\vec x \in D} a(\vec x, \vec y) = \| a^{-1}(\vec y) \|^{-1}_{L^\infty(D)} \),
 \[ \| a(\vec y) \|_{L^\infty(D)}^r \; \| a^{-1}(\vec y) \|_{L^\infty(D)}^r \le C_Q \prod_{j = 1}^\infty \exp\left( 2b_j|y_j| \right)\quad\corr{\text{for all}~\bsy\in U} \]
 with the \( b_j \) as characterized in \Cref{IT:gev} and \( r \) as in \Cref{IT:qoi}.
\end{assumption}
\corr{Notably, while \Cref{IT:gev} is used to establish dimension-independent QMC convergence rates, \Cref{IT:rat} is needed to ensure that the expected value of the PDE response does not diverge. We remark that Assumption~\ref{IT:rat} needs to be enforced in the infinite-dimensional parameter space $U$.}

\begin{theorem}\label{thm:qmcweight2}
 Let Assumptions \ref{IT:norm}--\ref{IT:rat} hold with $\mu=\bigotimes_{j\geq 1}\mathcal N(0,1)$ and $U=\mathbb R^{\mathbb N}$. Then, there exists a generating vector constructed by the CBC algorithm such that for \( \sigma p < 1 \),
 \[
  \sqrt{\EE \big|I_s^{\varphi}(J(\corr{\vec q_{h}}))-\Qmc(J(\corr{\vec q_{h}}))\big|^2} =\begin{cases} \mathcal O(n^{-r/p+1/2})&\text{if}~p\in (\frac{2r}3,\frac1\sigma),\\ \mathcal O(n^{-1+\varepsilon})\; \forall \varepsilon\in(0,\frac{2\sigma r-3}{2\sigma r-4})&\text{if}~p\in (0,\frac{2r}{3}], \end{cases}
 \]
 where the implied coefficient\corr{, which depends on $\varepsilon$, } is independent of $s$ when the generating vector is obtained by a CBC algorithm using the weights
 \[
  \gamma_{\setu}=\bigg(\frac{2^{|\setu|}\sum_{\setv\subseteq\setu}((C_RC_G+1)^{2r|\setv|}(|\setv|!)^{2r\sigma}\boldsymbol b_{\setv}^{2r})}{\prod_{j\in\setu}\varrho_j(\lambda)2{\rm e}^{2b_j^2}\Phi(2b_j)(\alpha_j-b_j)}\bigg)^{\frac{1}{1+\lambda}}, \quad \text{ where } \quad
 \alpha_j=\frac12\bigg(\beta_j+\sqrt{\beta_j^2+1-\frac{1}{2\lambda}}\bigg)
 \]
 and
 \begin{equation*}
  \lambda=\begin{cases} \frac{p}{2r-p}&\text{if}~p\in (\frac{2r}{3},\frac1\sigma),\\
 \frac{1}{2-2\varepsilon}~\text{for arbitrary}~\varepsilon\in (0,\frac{2\sigma r-3}{2\sigma r-4})&\text{if}~p\in (0,\frac{2r}{3}]. \end{cases}%
 \end{equation*}
\end{theorem}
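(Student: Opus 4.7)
The plan is to mirror the proof of Theorem~\ref{thm:thm1}, substituting Theorem~\ref{thm:lognormalqmcerror} for Theorem~\ref{THM:bounded} and working throughout with the Gaussian-weighted norm~\eqref{eq:gaussnorm}. First, I would apply Theorem~\ref{thm:lognormalqmcerror} with $F = J(\vec q_h)$, so it remains to control $C_{s,\gamma,\lambda,\alpha}\,\|J(\vec q_h)\|_{s,\gamma,\alpha}$ in an $s$-independent manner. The squared inner integral in~\eqref{eq:gaussnorm} is estimated by Cauchy--Schwarz (lawful since $\prod_{j\notin\setu}\varphi(y_j)$ is a product probability density), after which the chain of inequalities used in the proof of Theorem~\ref{thm:thm1}---\ref{IT:qoi}, Young's inequality, \ref{IT:norm}, and Lemma~\ref{lem:norm-bound-y-derivative}---delivers the pointwise upper bound
\[
\bigg|\frac{\partial^{|\setu|}}{\partial\bsy_\setu}J(\vec q_h(\bsy))\bigg|^{2}\le 2^{|\setu|}C^{2r}\,\frac{\overline{a}^{\,r}(\bsy)}{\underline{a}^{\,r}(\bsy)}\sum_{\setv\subseteq\setu}(C_RC_G+1)^{2r|\setv|}(|\setv|!)^{2r\sigma}\,\bd b_{\setv}^{2r},
\]
with $C$ collecting constants from $\|f\|_{L^2(D)}$, $C_J$, $C_E$, and $C_S$.

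The essential new ingredient in the Gaussian case is Assumption~\ref{IT:rat}, which I would invoke to replace the ratio $\overline{a}^{\,r}(\bsy)/\underline{a}^{\,r}(\bsy)$ by $C_Q\prod_j\exp(2b_j|y_j|)$, so that the integral over $\bsy$ decouples into one-dimensional factors. The Gaussian moment identity
\[
\int_{\mathbb R}e^{2b|y|}\varphi(y)\,{\rm d}y = 2e^{2b^2}\Phi(2b)
\]
then handles each inactive variable, while each outer integral over an active variable carrying the weight $\varpi_j^2(y_j)=e^{-2\alpha_j|y_j|}$ contributes $1/(\alpha_j-b_j)$, provided $\alpha_j>b_j$---which the stated choice $\alpha_j=\tfrac12\big(b_j+\sqrt{b_j^2+1-1/(2\lambda)}\big)$ guarantees whenever $\lambda>1/2$ by forcing $\alpha_j-b_j\ge\tfrac12\sqrt{1-1/(2\lambda)}$. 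The factor $\prod_{j\ge 1}2e^{2b_j^2}\Phi(2b_j)$ that survives from the inactive integrations is dimension-independent and finite because $\bd b\in\ell^p\subseteq\ell^1$. Collecting these pieces produces
\[
\|J(\vec q_h)\|_{s,\gamma,\alpha}^2 \le C'\sum_{\setu\subseteq\{1:s\}}\frac{2^{|\setu|}\,\beta_\setu}{\gamma_\setu\prod_{j\in\setu}2e^{2b_j^2}\Phi(2b_j)(\alpha_j-b_j)}
\]
with $\beta_\setu=\sum_{\setv\subseteq\setu}(C_RC_G+1)^{2r|\setv|}(|\setv|!)^{2r\sigma}\bd b_\setv^{2r}$ and $C'$ an $s$-independent constant.

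Multiplying by $C_{s,\gamma,\lambda,\alpha}^2$ from~\eqref{EQ:cgla_log} and applying Lemma~\ref{LEM:optimalgamma} with $\rho_\setu=\prod_{j\in\setu}\varrho_j(\lambda)$ and the effective $\beta_\setu$ above pins down the stated optimal weights $\gamma_\setu$ and reduces the problem to controlling a sum whose per-coordinate factor at $j\in\setu$ now combines $\varrho_j(\lambda)^{1/(1+\lambda)}$, $(2e^{2b_j^2}\Phi(2b_j)(\alpha_j-b_j))^{-\lambda/(1+\lambda)}$, and a power of $(C_RC_G+1)b_j$. From this point, the remainder of the argument---Jensen's inequality, the subset-to-multi-index reversal via $(\sum c_j)^\ell\ge\ell!\sum_{|\setu|=\ell}\prod_{j\in\setu}c_j$, and the ratio-test case split on $p\in(\tfrac{2r}{3},\tfrac{1}{\sigma})$ versus $p\in(0,\tfrac{2r}{3}]$---is a verbatim repetition of the last half of the proof of Theorem~\ref{thm:thm1}, delivering the two advertised convergence regimes.

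The main obstacle I anticipate is verifying that the accumulated per-coordinate factor $\Xi_j$ inherits the $\ell^p$ summability of $\bd b$. Since $\varrho_j(\lambda)$ grows like $\exp(\alpha_j^2/\eta)$ with $\eta=(2\lambda-1)/(4\lambda)$, unbounded growth of $\alpha_j$ would be fatal; however, the specific balancing in the stated choice of $\alpha_j$ keeps $\alpha_j$ uniformly bounded in $j$, and so $\varrho_j(\lambda)$, $(\alpha_j-b_j)^{-1}$, and $2e^{2b_j^2}\Phi(2b_j)$ are all bounded per coordinate. Consequently $\Xi_j\le(\mathrm{const})\cdot b_j$, whence $\sum_j\Xi_j^p<\infty$ by~\ref{IT:gev}, keeping the ratio test alive in both cases and producing the same dimension-independent QMC rates as in Theorem~\ref{thm:thm1}.
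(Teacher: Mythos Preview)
Your proposal is correct and follows essentially the same route as the paper: apply Theorem~\ref{thm:lognormalqmcerror}, bound the inner integral via Cauchy--Schwarz together with \ref{IT:qoi}, \ref{IT:norm}, Young's inequality and Lemma~\ref{lem:norm-bound-y-derivative}, invoke \ref{IT:rat} to factor the $\bsy$-integral, evaluate the resulting one-dimensional Gaussian and exponential integrals, optimize $\gamma_\setu$ via Lemma~\ref{LEM:optimalgamma}, and then repeat the Jensen/ratio-test endgame from Theorem~\ref{thm:thm1}. The only slip is the claimed inequality $\alpha_j-b_j\ge\tfrac12\sqrt{1-1/(2\lambda)}$, which goes the wrong way (the difference $\tfrac12(\sqrt{b_j^2+c}-b_j)$ is \emph{decreasing} in $b_j$); but all that is actually needed is $\alpha_j>b_j$ and boundedness of $(\alpha_j-b_j)^{-1}$ over the bounded range of $b_j$, both of which hold, so the argument goes through.
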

\begin{proof}
 We apply Theorem~\ref{thm:lognormalqmcerror} with $F$ set to $J(\boldsymbol q_h)$ to get
 \[
  \sqrt{\mathbb E\big[(I_s^{\varphi}(J(\boldsymbol q_h))-Q_{\boldsymbol\Delta}^{\varphi}(J(\boldsymbol q_h)))^2\big]}\leq\frac{C_{s,\boldsymbol\gamma,\lambda,\alpha}}{\sqrt{R\,n^{1/2}}}\|F\|_{s,\boldsymbol\gamma,\alpha}. 
 \]
 We begin by bounding the norm $\|J(\boldsymbol q_h)\|_{s,\boldsymbol\gamma,\alpha}$ defined in~\eqref{eq:gaussnorm}. Similarly to the affine and uniform case, we denote by $t_{\setu}$ the inner integral over the inactive variables not in $\setu$ and proceed to estimate
 \begin{align*}
  t_{\setu}&:=\bigg(\int_{\mathbb R^{s-|\setu|}}\frac{\partial^{|\setu|}}{\partial\bsy_{\setu}}J(\boldsymbol q_h(\bsy))\prod_{j\in\{1:s\}\setminus\setu}\varphi(y_j)\,{\rm d}\bsy_{-\setu}\bigg)^2\\
  &\leq \bigg(\int_{\mathbb R^{s-|\setu|}}\bigg|\frac{\partial^{|\setu|}}{\partial\bsy_{\setu}}J(\boldsymbol q_h(\bsy))\bigg|^2\prod_{j\in\{1:s\}\setminus \setu}\varphi(y_j)\,{\rm d}\bsy_{-\setu}\bigg) && \text{ by Hölder's ineq.}\\
  & \qquad \times \bigg(\int_{\mathbb R^{s-|\setu|}}\prod_{j\in\{1:s\}\setminus \setu}\varphi(y_j)\,{\rm d}\bsy_{-\setu}\bigg) && \text{ $=1$}\\
  &\leq C_J^{2r}\int_{\mathbb R^{s-|\setu|}}\bigg(\sum_{\setv\subseteq\setu}\bigg\|\frac{\partial^{|\setv|}}{\partial \bsy_{\setv}}\boldsymbol q_h(\bsy)\bigg\|_{L^2(D)}^r\bigg)^2\prod_{j\in\{1:s\}\setminus \setu}\varphi(y_j)\,{\rm d}\bsy_{-\setu} && \text{ by \Cref{IT:qoi}}\\
  &\leq C_J^{2r}C_E^{2r}\int_{\mathbb R^{s-|\setu|}}\frac{1}{{\underline{a}(\bsy)^r}}\bigg(\sum_{\setv\subseteq\setu}\bigg\|\frac{\partial^{|\setv|}}{\partial \bsy_{\setv}}\boldsymbol q_h(\bsy)\bigg\|_{\mathcal W_h}^r\bigg)^2\prod_{j\in\{1:s\}\setminus \setu}\varphi(y_j)\,{\rm d}\bsy_{-\setu} && \text{ by \Cref{IT:norm},}\\
  &\leq 2^{|\setu|}C_J^{2r}C_E^{2r}\int_{\mathbb R^{s-|\setu|}}\frac{1}{{\underline{a}(\bsy)^r}}\sum_{\setv\subseteq\setu}\bigg\|\frac{\partial^{|\setv|}}{\partial \bsy_{\setv}}\boldsymbol q_h(\bsy)\bigg\|_{\mathcal W_h}^{2r}\prod_{j\in\{1:s\}\setminus \setu}\varphi(y_j)\,{\rm d}\bsy_{-\setu} && \text{ by Young's ineq.}\\
  &\leq 2^{|\setu|}C_J^{2r}C_E^{2r}\int_{\mathbb R^{s-|\setu|}}\frac{\overline{a}(\bsy)^{r}}{{\underline{a}(\bsy)^r}}\sum_{\setv\subseteq\setu}\big(C_S^{2r}\|f\|_{L^2(D)}^{2r}(C_RC_G+1)^{2r|\setv|} && \text{ by Lemma~\ref{lem:norm-bound-y-derivative},} \\
  & \qquad \times (|\setv|!)^{2r\sigma}\boldsymbol b_{\setv}^{2r}\big)\prod_{j\in\{1:s\}\setminus \setu}\varphi(y_j)\,{\rm d}\bsy_{-\setu}.
 \end{align*}
 Then, setting $C^{2r}=C_E^{2r}C_J^{2r}C_S^{2r}\|f\|_{L^2(D)}^{2r}$ and using the value of $C_{s,\boldsymbol\gamma,\lambda,\alpha}$ in \eqref{EQ:cgla_log} we obtain
 \begin{align*}
  C_{s,\boldsymbol\gamma,\lambda,\alpha}^2\|J(\boldsymbol q_h) & \|_{s,\boldsymbol\gamma,\alpha}^2 = C_{s,\boldsymbol\gamma,\lambda,\alpha}^2\sum_{\setu\subseteq\{1:s\}}\frac{1}{\gamma_{\setu}}\int_{\mathbb R^{|\setu|}}t_{\setu}\prod_{j\in\setu}\varpi_j^2(y_j)\,{\rm d}\bsy_{\setu}\\
  &\leq \bigg(2\sum_{\varnothing\neq \setu\subseteq\{1:s\}}\gamma_{\setu}^{\lambda}\prod_{j\in\setu}\varrho_j(\lambda)\bigg)^{\frac{1}{\lambda}} \sum_{\setu\subseteq\{1:s\}}\frac{2^{|\setu|}C^{2r}}{\gamma_{\setu}}\sum_{\setv\subseteq\setu}\big((C_RC_G+1)^{2r|\setv|}(|\setv|!)^{2r\sigma}\boldsymbol b_{\setv}^{2r}\big)\\
  & \qquad \times\int_{\mathbb R^{s}}\frac{\overline{a}(\bsy)^{r}}{{\underline{a}(\bsy)^r}}\prod_{j\in\{1:s\}\setminus \setu}\varphi(y_j)\prod_{j\in\setu}\varpi_j^2(y_j)\,{\rm d}\bsy
 \end{align*}
 Using \Cref{IT:rat}, Fubini's theorem, and the integral identity $\int_{\mathbb R}{\rm e}^{2b_j|y|}\varphi(y)\,{\rm d}y=2{\rm e}^{2b_j^2}\Phi(2b_j)$, we receive
 \begin{align*}
  \int_{\mathbb R^s}\frac{\overline{a}(\bsy)^r}{\underline{a}(\bsy)^r}&\prod_{j\in\{1:s\}\setminus\setu}\varphi(y_j)\prod_{j\in\setu}\varpi_j^2(y_j)\,{\rm d}\bsy\\
  &\leq C_Q\int_{\mathbb R^s}\bigg(\prod_{j\in\{1:s\}\setminus\setu}{\rm e}^{2b_j|y_j|}\varphi(y_j)\bigg)\bigg(\prod_{j\in\setu}{\rm e}^{2b_j|y_j|}\varpi_j^2(y_j)\bigg)\,{\rm d}\bsy && \text{ by \Cref{IT:rat}}\\
  &=C_Q\prod_{j\in\{1:s\}\setminus\setu}\int_{\mathbb R}{\rm e}^{2b_j|y_j|}\varphi(y_j)\,{\rm d}y_j \prod_{j\in\setu}\int_{\mathbb R} {\rm e}^{2b_j|y_j|}\varpi_j^2(y_j)\,{\rm d}y_j && \text{ by Fubini}\\
  &= C_Q \prod_{j=1}^s 2{\rm e}^{2b_j^2}\Phi(2b_j)\prod_{j\in\setu}\frac{1}{2{\rm e}^{2b_j^2}\Phi(2b_j)}\int_{\mathbb R}{\rm e}^{2b_j|y_j|}\varpi_j^2(y_j)\,{\rm d}y_j && \text{ by integral ident.}
 \end{align*}
 Recalling that $\varpi_j(y_j)={\rm e}^{-\alpha_j|y_j|}$, the remaining integral is finite provided that $\alpha_j>b_j$, with 
 \[
  \int_{\mathbb R}{\rm e}^{2b_j|y_j|}\varpi_j^2(y_j)\,{\rm d}y_j=\frac{1}{\alpha_j-b_j}
 \]
 and we obtain
 \begin{multline*}
  C_{s,\boldsymbol\gamma,\lambda,\alpha}^2\|J(\boldsymbol q_h)\|_{s,\boldsymbol\gamma,\alpha}^2 \leq \bigg(2\sum_{\varnothing\neq \setu\subseteq\{1:s\}}\gamma_{\setu}^{\lambda}\prod_{j\in\setu}\varrho_j(\lambda)\bigg)^{\frac{1}{\lambda}} C_Q\\
  \sum_{\setu\subseteq\{1:s\}}\frac{2^{|\setu|}C^{2r}}{\gamma_{\setu}}\sum_{\setv\subseteq\setu}\big((C_RC_G+1)^{2r|\setv|}(|\setv|!)^{2r\sigma}\boldsymbol b_{\setv}^{2r}\big) \prod_{j=1}^s 2{\rm e}^{2b_j^2}\Phi(2b_j)\prod_{j\in\setu}\frac{1}{2{\rm e}^{2b_j^2}\Phi(2b_j)(\alpha_j-b_j)}.
 \end{multline*}
 Similarly to the affine and uniform case, the upper bound is (up to a constant scaling factor) of the form
 \[
  \boldsymbol \gamma\mapsto \bigg(\sum_i \rho_i\gamma_i^{\lambda}\bigg)^{1/\lambda}\bigg(\sum_i \beta_i\gamma_i^{-1}\bigg),
 \]
 where $\rho_{\setu}=\prod_{j\in\setu}\varrho_j(\lambda)$ and \[\beta_{\setu}=2^{|\setu|}\sum_{\setv\subseteq\setu}((C_RC_G+1)^{2r|\setv|}(|\setv|!)^{2r\sigma}\boldsymbol b_{\setv}^{2r})\prod_{j\in\setu}\frac{1}{2{\rm e}^{2b_j^2}\Phi(2b_j)(\alpha_j-b_j)}.\] The minimizing weights are given by
 \[
  \gamma_{\setu}=\bigg(\frac{2^{|\setu|}\sum_{\setv\subseteq\setu}((C_RC_G+1)^{2r|\setv|}(|\setv|!)^{2r\sigma}\boldsymbol b_{\setv}^{2r})}{\prod_{j\in\setu}\varrho_j(\lambda)2{\rm e}^{2b_j^2}\Phi(2b_j)(\alpha_j-b_j)}\bigg)^{\frac{1}{1+\lambda}}
 \]
 Now plugging these optimized weights into the error bound yields
 \begin{align*}
  M_{s,\boldsymbol\gamma,\lambda,\alpha}^{\lambda}&=\sum_{\setu\subseteq\{1:s\}}\rho_{\setu}^{\frac{1}{1+\lambda}}\beta_{\setu}^{\frac{\lambda}{1+\lambda}}\\
  &=\sum_{\setu\subseteq\{1:s\}}\prod_{j\in\setu}\varrho_j(\lambda)^{\frac{1}{1+\lambda}}2^{\frac{\lambda}{1+\lambda}|\setu|}\bigg(\prod_{j\in\setu}\frac{1}{2{\rm e}^{2b_j^2}\Phi(2b_j)(\alpha_j-b_j)}\bigg)^{\frac{\lambda}{1+\lambda}} \\
  & \qquad \times \bigg(\sum_{\setv\subseteq\setv}((C_RC_G+1)^{2r|\setv|}(|\setv|!)^{2r\sigma}\boldsymbol b_{\setv}^{2r})\bigg)^{\frac{\lambda}{1+\lambda}}.
 \end{align*}
 Choosing \( \alpha_j=\frac12\left(b_j+\sqrt{b_j^2+1-\frac{1}{2\lambda}}\right) \) ensures that $\frac{\partial}{\partial \alpha_j}M_{s,\boldsymbol\gamma,\lambda,\alpha}^{\lambda}=0$ for all $j$, hence choosing $\alpha_j$ this way minimizes the constant factor in the error bound. Moreover, upper bounding via Jensen's inequality yields
 \begin{align*}
  M_{s,\boldsymbol\gamma,\lambda,\alpha}^{\lambda} &\leq\sum_{\setu\subseteq\{1:s\}}\prod_{j\in\setu}\varrho_j(\lambda)^{\frac{1}{1+\lambda}}2^{\frac{\lambda}{1+\lambda}|\setu|}\bigg(\prod_{j\in\setu}\frac{1}{2{\rm e}^{2b_j^2}\Phi(2b_j)(\alpha_j-b_j)}\bigg)^{\frac{\lambda}{1+\lambda}}\\
  & \qquad \times \sum_{\setv\subseteq\setu}(C_RC_G+1)^{\frac{2\lambda r|\setv|}{1+\lambda}}(|\setv|!)^{\frac{2\lambda r\sigma}{1+\lambda}}\boldsymbol b_{\setv}^{\frac{2\lambda r}{1+\lambda}}\\
  &=\sum_{\setu\subseteq\{1:s\}}\bigg(\prod_{j\in\setu} \rho_j(\lambda)^{\frac{1}{1+\lambda}}\bigg)2^{\frac{\lambda}{1+\lambda}|\setu|}\bigg(\prod_{j\in\setu}\frac{1}{2{\rm e}^{2b_j^2}\Phi(2b_j)(\alpha_j-b_j)}\bigg)^{\frac{\lambda}{1+\lambda}}\\
  &\qquad \times \sum_{\boldsymbol m_{\setu}\in\{0:1\}^{|\setu|}}(C_RC_G+1)^{\frac{2\lambda r |\boldsymbol m_{\setu}|}{1+\lambda}}(|\boldsymbol m_{\setu}|)!^{\frac{2\lambda r\sigma}{1+\lambda}}\prod_{j\in\setu}b_j^{\frac{2\lambda r}{1+\lambda}m_j}\\
  &\leq \sum_{\setu\subseteq\{1:s\}}\sum_{\boldsymbol m_{\setu}\in\{0:1\}^{|\setu|}}\bigg((|\boldsymbol m_{\setu}|!)^{\sigma r}\prod_{j\in\setu}\Xi_j^{rm_j}\bigg)^{\frac{2\lambda}{1+\lambda}},
 \end{align*}
 where the last inequality holds, since we define
 \[
  \Xi_j=\max\{1,\rho_j(\lambda)^{\frac{1}{2}}2^{\frac{1}{2}}\bigg(\frac{1}{2{\rm e}^{2b_j^2}\Phi(2b_j)(\alpha_j-b_j)}\bigg)^{\frac{1}{2}}\}^{1/r}(C_RC_G+1)b_j.
 \]
 Noting that
 \[
  \frac{\rho_j(\lambda)}{\alpha_j-b_j}\propto \frac{\exp\big(\frac{\lambda}{4\eta}(b_j+\sqrt{b_j^2+1-\frac{1}{2\lambda}})^2)}{\frac12\sqrt{b_j^2+1-\frac{1}{2\lambda}}-\frac12 b_j}=:g_{\lambda}(b_j).
 \]
 Evidently $\frac{\partial}{\partial b}g_{\lambda}(b)=\frac{16{\rm e}^{\frac14(2b+\sqrt{2+4b^2})^2}}{(\sqrt{4b^2+2}-2b)^2}>0$, and since $(b_j)_{j\geq 1}$ is summable, there exists a number $C>0$ independently of $s$ but depending on $\lambda$ such that \( \Xi_j\leq Cb_j \) and hence $(\Xi_j)\in\ell^p$ for the same $p$ satisfying $(b_j)\in\ell^p$. The remainder of the proof follows by following the steps taken in the proof of the affine case.
\end{proof}

\begin{example}[lognormal model]
 In the lognormal case  (cf., e.g.,~\cite{gittelson,GrahamKNSSS15,GrahamKNSS11,log3,log4,log5,schwabtodor}), we define
 \begin{equation*}
  [a(\vec y)](\vec x) = a_0(\vec x) \exp\left( \sum_{j=1}^\infty y_j \psi_j(\vec x) \right) \qquad \text{ for all } \vec x \in D,
 \end{equation*}
 where $y_1, y_2, \dots$ are independently and identically distributed standard normal random variables, composing $\vec y = (y_1, y_2, \dots)$, and $a_0(\vec x) > 0$. Defining $\vec b = (b_1, b_2, \dots)$ via $b_j = \| \psi_j \|_{L^\infty(D)}$, we assume that $\vec b$ satisfies $\sum_{j \in \IN} b^p_j < \infty$ for some $p \in (0,1]$, which allows us to define the admissible set of stochastic parameters $\vec y$ as
 \begin{equation*}
  U_{\vec b} = \left\{ \vec y \in \IR^\IN\colon \sum_{j \in \IN} b_j |y_j| < \infty \right\} \subset \IR^\IN.
 \end{equation*}
 Although $U_{\vec b}$ is not a countable product of subsets of $\IR$, it is still $\vec \mu_G$ measurable and of full Gaussian measure, i.e., $\vec \mu_G(U_{\vec b}) = 1$, see \cite[Lem.\ 2.28]{SchwabG11}. In this setting, there holds
 $$
\corr{\partial_{\bsy}^{\bsnu}a(\bsy)=a(\bsy)\prod_{j\in{\rm supp}(\bsnu)}\psi_j(\bsx)^{\nu_j}}
 $$
 and hence
 \begin{equation*}
  \left\| \frac{\partial_{\bsy}^{\bsnu} a(\vec y)}{a(\vec y)} \right\|_{L^\infty(D)} {\corr{\leq}\,} \boldsymbol b^{\bsnu},%
 \end{equation*}
\corr{showing that the lognormal setting satisfies the ratio bound in Assumption~\ref{IT:gev}.}%
\end{example}

\section{Finite element methods that fulfill Assumptions \ref{IT:norm}--\ref{IT:recur}}\label{SEC:finite_elements}
%
\corr{The purpose of this section is to demonstrate how the abstract assumptions introduced in Section \ref{SEC:setting} can be verified for concrete finite element discretizations of the balance law. Rather than treating these assumptions as purely formal conditions, it is useful to interpret them operationally in terms of familiar ingredients from finite element analysis. \Cref{IT:norm} expresses that the discrete norm \( \|\cdot\|_{\mathcal W_h} \) controls the physical energy of the flux, i.e., that the chosen method provides an energy norm dominating the \( L^2(D) \) norm of the flux weighted by the coefficient. In practice this follows from standard boundedness properties of the bilinear forms defining the method and from the definition of the discrete norm. \Cref{IT:stab} represents the stability of the discretization: the discrete flux must remain bounded in the \( \mathcal W_h \)-norm in terms of the data and the coefficient. For mixed and hybridizable methods this typically follows from a discrete inf–sup condition together with boundedness of the coefficient field. \Cref{IT:recur} is the key parametric derivative recursion required for the QMC analysis. Operationally, it is obtained by differentiating the discrete equations with respect to the parameter variables and using stability of the discrete problem to bound the resulting terms; the constants appearing in the recursion correspond directly to the stability constant of the method and the bounds on derivatives of the coefficient in \Cref{IT:gev}}

\corr{With this interpretation in mind, verifying the sufficient conditions reduces to three standard steps for each numerical scheme: (i) identify the discrete flux space \( \mathcal W_h \) and norm \( \|\cdot\|_{\mathcal W_h} \) so that \Cref{IT:norm} holds; (ii) establish stability of the discrete formulation, typically via an inf–sup argument or coercivity estimate yielding \Cref{IT:stab}; and (iii) differentiate the discrete equations with respect to the parameters and apply the stability estimate to obtain the recursive bound \Cref{IT:recur} The remaining assumptions involve either the quantity of interest or the coefficient model and are therefore largely method-independent. The derivations below illustrate these steps for several representative discretizations—including conforming finite elements, mixed methods, and hybridizable discontinuous Galerkin schemes—while explicitly identifying how their stability constants and norms enter the abstract framework.}

Let us consider a mesh $\mesh \!=\! \{ \elem_1, \elem_2, \dots, \elem_N \}$. The union of the element boundaries is denoted by $\partial \mesh = \bigcup_{\elem \in \mesh} \partial \elem$. For functions $u_h, v_h \in \prod_{E \in \mesh} L^2(E), \vec q_h, \vec r_h \in \prod_{E \in \mesh} L^2(E)^d$,  and $m_h, \mu_h \in \prod_{E \in \mesh} L^2(\d E)$, define  scalar products
\begin{gather*}
 (u_h, v_h)_\mesh = \sum_{\elem \in \mesh} \int_\elem u_h v_h \dx, \qquad \langle m_h, \mu_h\rangle_{\partial \mesh} = \sum_{\elem \in \mesh} \langle m_h, \mu_h \rangle_{\partial \elem} = \sum_{\elem \in \mesh} \int_{\partial \elem} m_h  \mu_h \ds, \\
 \text{and }
 \qquad (\vec q_h, \vec r_h)_{\mesh} = \sum_{\elem \in \mesh} \int_\elem  \vec q_h \cdot \vec r_h \dx
\end{gather*}
with induced norms $\|u_h\|_{\mesh}$, $\| m_h \|_{\partial \mesh}$, and $\| \vec q_h \|_{\mesh}$. In this section, we show that  several (classes of) finite element methods verify \Cref{IT:norm}, \Cref{IT:stab} and \Cref{IT:recur}.

\subsection{Mixed methods (MMs)}
%
We consider  mixed methods (MM) where $u_h(\vec y)$ lies in a finite-dimensional subspace $V_h$ of  $ L^2(D)$,   the flux approximation  $\vec q_h(\vec y)$ lies in a finite-dimensional subspace $\mathcal W_h$ of  $ H(\dive,D)$ equipped with norm
\begin{equation} \label{eq:Qh-norm-MM}
 \| \vec q_h(\vec y) \|_{\mathcal W_h} = \| \sqrt{a(\vec y)} \vec q_h(\vec y) \|_{\mesh}.
\end{equation}
The spaces $V_h$ and $\mathcal W_h$ for MM are selected such that there is $\beta > 0$ with
\begin{equation}\label{EQ:inf_sup_mm}
 \sup_{\vec r_h \in \mathcal W_h} \frac{(\nabla \cdot \vec r_h, v_h)_D}{\| \vec r_h \|_{H(\dive,D)}} \ge \beta \| v_h \|_{D} \qquad \text{ holds for all } v_h \in V_h.
\end{equation}
The approximations $\vec q_h$ and $u_h$ satisfy the mixed variational problem
\begin{subequations}\label{EQ:mm_scheme}
\begin{align}
 ( a(\vec y) \vec q_h(\vec y), \vec r_h )_D - ( u_h(\vec y), \nabla \cdot \vec r_h )_D & = G(\vec r_h) && \text{ for all } \vec r_h \in \mathcal W_h, \label{EQ:mm_primal} \\
 (\nabla \cdot \vec q_h(\vec y), v_h)_D & = F(v_h) && \text{ for all } v_h \in V_h,
\end{align}
\end{subequations}
and $F(v_h) = (f, v_h)_D$, $G(\vec r_h) = 0$. Examples of methods in which the conditions on $V_h$ and $\mathcal W_h$ are satisfied include the Raviart--Thomas (RT) method on simplices or $d$-rectangles and Brezzi--Douglas--Marini (BDM) method on simplices. The main result of this subsection is as follows.

\begin{proposition}\label{PROP:assumptions_mm}
 For any MM satisfying the inf-sup condition~\eqref{EQ:inf_sup_mm}, \Cref{IT:norm}, \Cref{IT:stab} and \Cref{IT:recur} hold.
\end{proposition}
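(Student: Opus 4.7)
The plan is to verify \ref{IT:norm}, \ref{IT:stab}, and \ref{IT:recur} in sequence, exploiting in turn the definition~\eqref{eq:Qh-norm-MM}, the inf-sup condition~\eqref{EQ:inf_sup_mm}, and a Leibniz differentiation of the mixed system~\eqref{EQ:mm_scheme}. Since $V_h$ and $Q_h$ do not depend on $\vec y$, parametric differentiation commutes with the variational formulation, so $\der u_h(\vec y) \in V_h$ and $\der \vec q_h(\vec y) \in Q_h$ for every $\bsnu \in \mathscr F$. Assumption~\ref{IT:norm} is then immediate: replacing $\vec q_h(\vec y)$ by $\der \vec q_h(\vec y)$ in~\eqref{eq:Qh-norm-MM} gives $\|\der \vec q_h(\vec y)\|_{Q_h} = \|\sqrt{a(\vec y)}\,\der \vec q_h(\vec y)\|_{\mesh}$, so \ref{IT:norm} holds with $C_E=1$, in fact with equality.

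For \ref{IT:stab} I would follow the standard Brezzi stability argument. The inf-sup condition~\eqref{EQ:inf_sup_mm} yields, for each $\vec y$, some $\vec r_h^\star \in Q_h$ with $(\nabla\cdot \vec r_h^\star, v_h)_D = (f, v_h)_D$ for all $v_h \in V_h$ and $\|\vec r_h^\star\|_{H(\dive,D)} \le \beta^{-1}\|f\|_{L^2(D)}$. Testing~\eqref{EQ:mm_primal} with $\vec r_h = \vec q_h(\vec y) - \vec r_h^\star$ kills the $u_h$-term because $u_h(\vec y)\in V_h$ and $(\nabla\cdot(\vec q_h(\vec y) - \vec r_h^\star), v_h)_D = 0$ for every $v_h \in V_h$; this reduces~\eqref{EQ:mm_primal} to $(a(\vec y)\vec q_h(\vec y), \vec q_h(\vec y))_D = (a(\vec y)\vec q_h(\vec y), \vec r_h^\star)_D$. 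A single Cauchy--Schwarz step together with $\|\sqrt{a(\vec y)}\,\vec r_h^\star\|_\mesh \le \|a(\vec y)\|_{L^\infty(D)}^{1/2}\|\vec r_h^\star\|_{H(\dive,D)}$ then delivers~\ref{IT:stab} with $C_S = \beta^{-1}$.

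The crux of the proof is~\ref{IT:recur}. I would apply $\der$ to~\eqref{EQ:mm_scheme}. Because $f$ is $\vec y$-independent and $\bsnu\neq \vec 0$, the balance law differentiates to $(\nabla\cdot\der\vec q_h(\vec y), v_h)_D = 0$ for all $v_h\in V_h$, while Leibniz's rule applied to the product $a(\vec y)\vec q_h(\vec y)$ in~\eqref{EQ:mm_primal} gives
\[
 (a(\vec y)\der\vec q_h(\vec y),\vec r_h)_D - (\der u_h(\vec y),\nabla\cdot \vec r_h)_D = -\sum_{\vec 0\neq \vec m\le \bsnu}\binom{\bsnu}{\vec m}\bigl(\derm a(\vec y)\,\dernum\vec q_h(\vec y),\vec r_h\bigr)_D.
\]
Testing with $\vec r_h = \der\vec q_h(\vec y) \in Q_h$ annihilates the cross term, because $\der u_h(\vec y) \in V_h$ is an admissible test function in the differentiated balance law. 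Factoring $\derm a(\vec y) = \bigl(\derm a(\vec y)/a(\vec y)\bigr)\sqrt{a(\vec y)}\cdot\sqrt{a(\vec y)}$ and applying Cauchy--Schwarz term by term bounds the right-hand side by $\sum_{\vec 0\neq\vec m\le\bsnu}\binom{\bsnu}{\vec m}\|\derm a(\vec y)/a(\vec y)\|_{L^\infty(D)}\|\dernum\vec q_h(\vec y)\|_{Q_h}\|\der\vec q_h(\vec y)\|_{Q_h}$. Dividing by the last factor produces~\ref{IT:recur} with $C_R = 1$.

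I do not anticipate any serious technical obstacle. The only subtlety—and the step that deserves emphasis—is the $\vec y$-independence of $V_h$ and $Q_h$: without it, neither $\der u_h(\vec y) \in V_h$ nor $\der \vec q_h(\vec y) \in Q_h$ would be automatic, and the cancellation of the cross term in both~\ref{IT:stab} and~\ref{IT:recur} would fail. Once this is granted, \ref{IT:stab} and~\ref{IT:recur} are essentially the same Brezzi-type test-function argument applied to the undifferentiated and differentiated mixed systems, respectively.
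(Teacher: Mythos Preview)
Your proof is correct and matches the paper's approach: \ref{IT:norm} with $C_E=1$ is immediate from~\eqref{eq:Qh-norm-MM}, and your argument for \ref{IT:recur} is exactly the paper's Lemma~\ref{LEM:mm_parametric}. For \ref{IT:stab} the paper (Lemma~\ref{LEM:mm_stab}) takes the dual route---testing~\eqref{EQ:mm_scheme} with the solution itself and then bounding $\|u_h\|_\mesh$ via~\eqref{EQ:inf_sup_mm}---rather than your lifting construction, but both are standard Brezzi arguments and yield the same constant $C_S=1/\beta$.
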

\begin{proof}
 It is obvious from~\eqref{eq:Qh-norm-MM} that \Cref{IT:norm} holds with $C_E=1$. By Lemma~\ref{LEM:mm_stab} below, we see that \Cref{IT:stab} holds  with $C_S = 1/\beta$. By Lemma~\ref{LEM:mm_parametric}, \Cref{IT:recur} holds  with $C_R=1$.
\end{proof}

\begin{lemma}\label{LEM:mm_stab}
 The system~\eqref{EQ:mm_scheme} has a unique solution for any $\vec y$ and any $G$ and $F$ in the dual space of $\mathcal W_h$ and $V_h$, respectively. When $G=0$ and $F(v_h) = (f, v_h)_D$, the solution $(u_h(\vec y), \vec q_h(\vec y)) \in V_h \times \mathcal W_h$ satisfies
 \begin{equation*}
  \frac{\beta \| u_h(\vec y) \|_\mesh}{\sqrt{\amax}} \le \| \vec q_h(\vec y) \|_{\mathcal W_h} \le \frac{\sqrt{\amax} \| f \|_\mesh}{\beta}.
 \end{equation*}
\end{lemma}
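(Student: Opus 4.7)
The argument splits into two parts: well-posedness of the discrete saddle point system for general data, and the two-sided stability bound for $G=0$ and $F=(f,\cdot)_D$. Since \eqref{EQ:mm_scheme} is a square finite-dimensional linear system, well-posedness reduces to uniqueness. For the homogeneous problem ($G=0$, $F=0$), testing \eqref{EQ:mm_primal} with $\vec r_h = \vec q_h(\vec y)$ and the balance equation with $v_h = u_h(\vec y)$ yields $(a(\vec y) \vec q_h, \vec q_h)_D = (u_h, \nabla\cdot\vec q_h)_D = 0$, which forces $\vec q_h(\vec y)=0$ since $a(\vec y) > 0$ pointwise. Returning to \eqref{EQ:mm_primal} with $\vec q_h = 0$ leaves $(u_h, \nabla\cdot \vec r_h)_D = 0$ for all $\vec r_h \in Q_h$, and the inf-sup condition \eqref{EQ:inf_sup_mm} then forces $u_h = 0$.

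The lower bound on $\|\vec q_h(\vec y)\|_{Q_h}$ comes from applying the inf-sup condition \eqref{EQ:inf_sup_mm} to $v_h = u_h(\vec y)$ and then eliminating the divergence using \eqref{EQ:mm_primal} with $G=0$:
\[
 \beta \|u_h(\vec y)\|_\mesh \le \sup_{\vec r_h \in Q_h} \frac{(\nabla \cdot \vec r_h, u_h(\vec y))_D}{\|\vec r_h\|_{H(\dive,D)}} = \sup_{\vec r_h \in Q_h} \frac{(a(\vec y) \vec q_h(\vec y), \vec r_h)_D}{\|\vec r_h\|_{H(\dive,D)}}.
\]
A Cauchy--Schwarz step together with the definition \eqref{eq:Qh-norm-MM} yields $(a \vec q_h, \vec r_h)_D \le \|\sqrt{a}\,\vec q_h\|_\mesh \|\sqrt{a}\,\vec r_h\|_\mesh \le \sqrt{\amax}\,\|\vec q_h(\vec y)\|_{Q_h}\,\|\vec r_h\|_{H(\dive,D)}$, so dividing by $\|\vec r_h\|_{H(\dive,D)}$ and taking the supremum proves the advertised lower bound.

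For the upper bound, test \eqref{EQ:mm_primal} with $\vec r_h = \vec q_h(\vec y)$ and the balance equation with $v_h = u_h(\vec y)$; the divergence couplings cancel exactly, producing the energy identity $\|\vec q_h(\vec y)\|_{Q_h}^2 = (a\,\vec q_h, \vec q_h)_D = (f, u_h(\vec y))_D$. A Cauchy--Schwarz step bounds this by $\|f\|_\mesh\,\|u_h(\vec y)\|_\mesh$, and inserting the lower bound $\|u_h(\vec y)\|_\mesh \le \sqrt{\amax}\,\|\vec q_h(\vec y)\|_{Q_h}/\beta$ just established, followed by division by $\|\vec q_h(\vec y)\|_{Q_h}$ (the case $\vec q_h = 0$ is trivial), yields the upper bound. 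I foresee no serious obstacle; the only point requiring attention is that the inf-sup constant $\beta$ in \eqref{EQ:inf_sup_mm} must be $\vec y$-independent, which is automatic since the condition is a purely geometric property of the pair $(V_h, Q_h)$. All parametric dependence in the bound is thereby carried by $\amax = \overline{a}(\vec y)$, exactly as stated.
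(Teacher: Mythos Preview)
Your proposal is correct and follows essentially the same route as the paper: the energy identity from testing with $(\vec q_h, u_h)$ combined with the inf--sup bound \eqref{EQ:inf_sup_mm} applied to $u_h$, then chained together. Your explicit uniqueness argument for well-posedness is a welcome addition (the paper merely cites this as well known), and your remark that $\beta$ is $\vec y$-independent is exactly the point that makes the bound useful downstream.
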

\begin{proof}
 The well-posedness is a well-known result of the realm of mixed methods. We omit it for brevity and only demonstrate the stability bounds. Testing \eqref{EQ:mm_scheme} with $\vec r_h \leftarrow \vec q_h(\vec y)$ and $v_h \leftarrow u_h(\vec y)$, and adding both equations yields
 \begin{equation*}
  \| \vec q_h(\vec y) \|^2_{\mathcal W_h} = F(u_h(\vec y)) \le \| f \|_\mesh \| u_h(\vec y) \|_\mesh.
 \end{equation*}
 Moreover, \eqref{EQ:inf_sup_mm} and \eqref{EQ:mm_primal} yield
 \begin{align*}
  \beta \| u_h(\vec y) \|_\mesh & \le \sup_{\vec r_h \in \mathcal W_h} \frac{(\nabla \cdot \vec r_h, u_h(\vec y))_D}{\| \vec r_h \|_{H(\dive,D)}} = \sup_{\vec r_h \in \mathcal W_h} \frac{( a(\vec y) \vec q_h(\vec y), \vec r_h )_D}{\| \vec r_h \|_{H(\dive,D)}} \\
  & \le \sqrt{\amax} \| \vec q_h(\vec y) \|_{\mathcal W_h} \sup_{\vec r_h \in \mathcal W_h} \frac{\| \vec r_h \|_D}{\| \vec r_h \|_{H(\dive,D)}} \le \sqrt{\amax} \| \vec q_h(\vec y) \|_{\mathcal W_h},
 \end{align*}
 where we exploited that $\| \vec r_h \|_D \le \| \vec r_h \|_{H(\dive,D)}$. Combining the two equations yields the result.
\end{proof}

\begin{lemma}\label{LEM:mm_parametric}
 We have the parametric regularity bounds
 \begin{equation*}
  \| \der \vec q_h(\vec y) \|_{\mathcal W_h} \le \sum_{\vec 0 \neq \vec m \le \vec \nu} \begin{pmatrix} \vec \nu \\ \vec m \end{pmatrix} \left\| \frac{\derm a(\vec y)}{a(\vec y)} \right\|_{L^\infty(D)} \| \dernum \vec q_h \|_{\mathcal W_h}.
\end{equation*}
\end{lemma}
\begin{proof}
 Differentiating \eqref{EQ:mm_scheme} with respect to the parameter/stochastic variable $\vec y$ yields
 \begin{subequations}\label{EQ:der_mm} 
 \begin{align}
  (a(\vec y) \der \vec q_h(\vec y), \vec r_h )_D - ( \der u_h(\vec y), \nabla \cdot \vec r_h )_D & = G(\vec r_h) && \text{ for all } \vec r_h \in \mathcal W_h, \label{EQ:der_mm_primal}\\
  (\nabla \cdot \der \vec q_h(\vec y), v_h)_D & = F(v_h) && \text{ for all } v_h \in V_h.
 \end{align}
 \end{subequations}
 Clearly, \eqref{EQ:der_mm} has the same shape as \eqref{EQ:mm_scheme}, but $F(v_h) = 0$ and
 \begin{subequations}\label{EQ:der_g}
 \begin{align}
  G(\vec r_h) & = - \sum_{\vec 0 \neq \vec m \le \vec \nu} \begin{pmatrix} \vec \nu \\ \vec m \end{pmatrix} ([\derm a(\vec y)] \dernum \vec q_h(\vec y), \vec r_h )_D \\
  & \le \sum_{\vec 0 \neq \vec m \le \vec \nu} \begin{pmatrix} \vec \nu \\ \vec m \end{pmatrix} \left\| \frac{\derm a(\vec y)}{a(\vec y)} \right\|_{L^\infty(D)} \| \dernum \vec q_h \|_{\mathcal W_h} \| \vec r_h \|_{\mathcal W_h}.
 \end{align}
 \end{subequations}
 Substituting $\der \vec q_h (\vec y)$ for $\vec r_h$ and $\der u_h(\vec y)$ for $v_h$ in \eqref{EQ:der_mm} and adding, the result follows.
\end{proof}

\subsection{Hybridizable and embedded discontinuous Galerkin (HDG and EDG)}\label{SEC:hdg}
%
The hybridizable discontinuous Galerkin (HDG) method identifies $u_h(\vec y) \in V_h \subset H^1(\mesh)$, its flux $\vec q_h(\vec y) \in \mathcal W_h \subset H(\dive, \mesh)^d$ with norm
\begin{equation}\label{EQ:hdg_norm}
 \| \vec q_h(\vec y) \|_{\mathcal W_h} = \| \sqrt{a(\vec y)} \vec q_h(\vec y) \|_{\mesh} + \|\sqrt\tau (u_h \corr{(\vec y)} - m_h \corr{(\vec y)} ) \|_{\partial \mesh},
\end{equation}
and $m_h\corr{(\vec y)} \in M_h \subset L^2(\faces)$ with $m_h\corr{(\vec y)}|_{\partial D} = 0$ such that
\begin{subequations}\label{EQ:hdg_scheme}
\begin{align}
 ( a(\vec y) \vec q_h(\vec y), \vec r_h )_\mesh - ( u_h(\vec y), \nabla \cdot \vec r_h )_\mesh + \langle m_h (\vec y), \vec r_h \cdot \normal \rangle_{\partial \mesh} & = G(\vec r_h) && \text{for all } \vec r_h \in \mathcal W_h, \label{EQ:hdg_primal} \\
 (\nabla \cdot \vec q_h(\vec y), v_h)_\mesh + \langle u_h(\vec y) - m_h(\vec y), \tau v_h \rangle_{\partial \mesh} & = F(v_h) && \text{for all } v_h \in V_h,\\
 \langle \vec q_h(\vec y) \cdot \normal + \tau ( u_h(\vec y) - m_h(\vec y) ), \mu_h \rangle_{\partial \mesh} & = 0 && \text{for all } \mu_h \in M_h,\label{EQ:mass_conservation}
\end{align}
\end{subequations}
where we choose the spaces $V_h$ and $\mathcal W_h$, such that there is $\beta > 0$ with
\begin{equation}\label{EQ:inf_sup_hdg}
 \sup_{\vec r_h \in \mathcal W_h } \frac{(\nabla \cdot \vec r_h, v_h)_\elem}{\| \vec r_h \|_{H(\dive,\elem)}} \ge \beta \| v_h \|_\elem \qquad \text{ holds for all } v_h \in V_h \text{ and all } \elem \in \mesh,
\end{equation}
and the parameter $\tau = 0$. Alternatively, we choose the parameter $\tau > 0$ and can freely choose the spaces $V_h$ and $\mathcal W_h$. The former setting includes the hybridized Raviart--Thomas (RT-H) method (on simplices and \(d\)-rectangles) and the hybridized Brezzi--Douglas--Marini (BDM-H) method (on simplices). At the same time, the latter approach covers the hybridizable local discontinuous Galerkin scheme (LDG-H) and the class of embedded discontinuous Galerkin (EDG) schemes. Again, $F(v_h) = (f, v_h)_D$, $G(\vec r_h) = 0$.

Notably, \eqref{EQ:hdg_scheme} can be statically condensed to form a symmetric positive definite system of linear equations; see \cite[Thm.\ 2.1]{CockburnGL09}, and we have the following relation to the mixed method:

\begin{proposition}\label{LEM:hdg_is_mm}
 The RT-H and BDM-H methods produce the same approximations as the RT and BDM methods with \( m_h = u_h \) on the mesh faces, respectively. That is, \Cref{IT:norm}, \Cref{IT:stab}, and  \Cref{IT:recur} hold.
\end{proposition}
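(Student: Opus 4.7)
The plan is to exploit the stated equivalence between the hybridized and unhybridized mixed methods to transport Proposition~\ref{PROP:assumptions_mm} over to the hybridized setting. For the RT-H and BDM-H schemes we are in the $\tau = 0$ branch of \eqref{EQ:hdg_scheme}, so the norm \eqref{EQ:hdg_norm} collapses to $\|\vec q_h(\vec y)\|_{Q_h} = \|\sqrt{a(\vec y)}\,\vec q_h(\vec y)\|_{\mesh}$, which is exactly the MM norm in \eqref{eq:Qh-norm-MM}. Consequently, Assumption~\ref{IT:norm} will follow with $C_E = 1$ as soon as we identify the hybridized flux with the mixed-method flux.

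The first step is to use \eqref{EQ:mass_conservation} with $\tau = 0$, namely $\langle \vec q_h(\vec y)\cdot\normal,\mu_h\rangle_{\partial\mesh} = 0$ for all $\mu_h \in M_h$. Since $M_h$ for RT-H/BDM-H contains the traces of the normal components of functions in the corresponding conforming $H(\dive,D)$ space (together with vanishing condition on $\partial D$), this interface condition forces the jump of $\vec q_h\cdot\normal$ across every interior face to vanish and $\vec q_h\cdot\normal = 0$ on $\partial D$ components where required. Hence $\vec q_h(\vec y)$ actually lies in the conforming RT or BDM subspace $Q_h^{\rm MM} \subset H(\dive,D)$.

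Next I would test \eqref{EQ:hdg_primal} against any $\vec r_h \in Q_h^{\rm MM}$. Since $m_h$ is single-valued on $\faces$ and $\vec r_h\cdot\normal$ is single-valued across interior faces with $m_h|_{\partial D}=0$, the coupling term telescopes:
\begin{equation*}
\langle m_h(\vec y),\vec r_h\cdot\normal\rangle_{\partial\mesh} = 0.
\end{equation*}
Thus the first two equations of \eqref{EQ:hdg_scheme} reduce exactly to \eqref{EQ:mm_scheme}, so the pair $(u_h(\vec y),\vec q_h(\vec y))$ coincides with the mixed-method solution. The identification $m_h = u_h$ on faces then follows from \eqref{EQ:hdg_primal} tested against those $\vec r_h \in Q_h$ that are not normally continuous: the resulting surface equation determines $m_h$ uniquely as the numerical trace, which by the standard hybridization argument (cf.\ \cite{CockburnGL09}) equals $u_h$ on $\faces$.

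With this equivalence in hand, Assumptions~\ref{IT:norm}, \ref{IT:stab}, and \ref{IT:recur} transfer directly from Proposition~\ref{PROP:assumptions_mm} to the hybridized methods, since the discrete flux, its $Q_h$-norm, and the parametric derivatives $\der\vec q_h(\vec y)$ all agree with their mixed-method counterparts (the latter because differentiating the hybridized system in $\vec y$ preserves the equivalence at every multi-index, the right-hand side structure being identical to \eqref{EQ:der_g}). The main obstacle is therefore purely a bookkeeping one: making sure the conforming subspace inclusion $Q_h^{\rm MM}\subset Q_h$ is exactly the one implied by the $\tau=0$ jump condition for the particular RT/BDM face space $M_h$ chosen; once that is verified, the inequalities are inherited verbatim from Lemmas~\ref{LEM:mm_stab} and~\ref{LEM:mm_parametric}.
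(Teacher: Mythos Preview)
Your approach is essentially the same as the paper's: the paper's proof is a two-sentence sketch noting that \eqref{EQ:mass_conservation} with $\tau=0$ forces $\vec q_h\in H(\dive,D)$, whence the equivalence is standard and the constants are inherited from Proposition~\ref{PROP:assumptions_mm}. You simply fill in the details of that sketch (the telescoping of the $m_h$ term for conforming test functions, and the reduction of \eqref{EQ:hdg_scheme} to \eqref{EQ:mm_scheme}), so the argument is correct and aligned with the paper.
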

\begin{proof}
 This standard result uses that condition \eqref{EQ:mass_conservation} implies that $\vec q_h \in H(\dive, D)$. The constants can be made explicit as performed in \Cref{PROP:assumptions_mm}.
\end{proof}

The \emph{inf-sup} condition \eqref{EQ:inf_sup_hdg} allows us to bound \( \| u_h(\vec y) \| \lesssim \sqrt{\amax} \| \vec q_h(\vec y) \|_{\mathcal W_h} \), which is needed to show show \Cref{IT:stab}, see the proof of \Cref{LEM:mm_stab}. For HDG methods, we assume a similar inequality, i.e.,
\begin{equation}\label{EQ:stab_u}
 \| u_h \|_\mesh \le C_B \sqrt{\overline a(\vec y)} \| \vec q_h \|_{\mathcal W_h},
\end{equation}
which can (in general) be established from \eqref{EQ:hdg_primal} and \eqref{EQ:inf_sup_hdg}, or from a generalization of Poincar\'e's inequality, or investigating the eigenvalues of HDG/EDG methods. If we use the BDM-H, RT-H, LDG-H, or EDG methods on simplicial or quadrilateral meshes, \eqref{EQ:stab_u} is a direct consequence of \cite[Thm. B.3]{LuMR23} or (less-direct) of \cite{Brenner03}. Relation \eqref{EQ:stab_u} is also vital in the theory of multigrid methods for HDG schemes, see \cite[(LS2) \& (LS6), shown in Sect.\ 6]{LuRK22a}. Moreover, \cite{Brenner03} allows to extend the result for LDG-H and EDG to certain non-simplicial/non-quadrilateral meshes.

\begin{proposition}
 For any HDG method, \Cref{IT:norm} and \Cref{IT:recur} hold. If additionally, \eqref{EQ:stab_u} holds, then also \Cref{IT:stab} holds.
\end{proposition}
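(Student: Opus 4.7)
The plan is to mirror the strategy used for mixed methods in \Cref{PROP:assumptions_mm}, adapted to the three-field HDG system \eqref{EQ:hdg_scheme}. Assumption~\ref{IT:norm} is immediate from the definition \eqref{EQ:hdg_norm} with $C_E=1$, since $\|\sqrt{a(\vec y)}\,\der\vec q_h(\vec y)\|_{L^2(D)}$ is exactly the first summand of $\|\der\vec q_h(\vec y)\|_{Q_h}$. The core of the argument is therefore an energy identity obtained by testing the (possibly differentiated) HDG system against the triple consisting of the unknowns themselves.

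For the recursive bound~\ref{IT:recur}, I would differentiate each equation of \eqref{EQ:hdg_scheme} in $\vec y$ using the Leibniz rule (assuming $|\vec\nu|\ge 1$), producing a system of the same structure with $F\equiv 0$ and with the additional right-hand side
\[
 G(\vec r_h) = -\sum_{\vec 0\neq \vec m\le\vec\nu}\binom{\vec\nu}{\vec m}([\derm a(\vec y)]\,\dernum\vec q_h(\vec y),\vec r_h)_\mesh
\]
in the first equation. Testing the differentiated equations with $(\vec r_h,v_h,\mu_h) = (\der\vec q_h,\der u_h,-\der m_h)$ and summing, the $(\der u_h,\nabla\cdot\der\vec q_h)_\mesh$ terms cancel, the boundary couplings $\langle \der m_h,\der\vec q_h\cdot\vec n\rangle_{\partial\mesh}$ cancel, and the remaining $\tau$-contributions combine into $\|\sqrt\tau(\der u_h-\der m_h)\|_{\partial\mesh}^2$. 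Pulling out $\derm a/a$ from each summand of $G$ and applying Cauchy--Schwarz yields
\[
 \|\sqrt{a}\der\vec q_h\|_\mesh^2 + \|\sqrt\tau(\der u_h-\der m_h)\|_{\partial\mesh}^2 \le X\cdot\|\sqrt{a}\der\vec q_h\|_\mesh,
\]
with $X=\sum_{\vec 0\neq \vec m\le\vec\nu}\binom{\vec\nu}{\vec m}\|\derm a/a\|_{L^\infty(D)}\|\sqrt{a}\dernum\vec q_h\|_\mesh$. Each of the two terms on the left is then individually bounded by $X$, so that $\|\der\vec q_h\|_{Q_h}\le 2X$; replacing $\|\sqrt{a}\dernum\vec q_h\|_\mesh$ by the larger $\|\dernum\vec q_h\|_{Q_h}$ gives~\ref{IT:recur} with $C_R=2$.

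For the stability bound~\ref{IT:stab}, the same triple test $(\vec q_h,u_h,-m_h)$ applied directly to \eqref{EQ:hdg_scheme} (with $G=0$ and $F(v_h)=(f,v_h)_D$) produces, after the same cancellations, the energy identity
\[
 \|\sqrt{a}\,\vec q_h\|_\mesh^2 + \|\sqrt\tau(u_h-m_h)\|_{\partial\mesh}^2 = (f,u_h)_\mesh \le \|f\|_\mesh\|u_h\|_\mesh.
\]
Invoking the new hypothesis \eqref{EQ:stab_u} bounds $\|u_h\|_\mesh$ by $C_B\sqrt{\amax}\|\vec q_h\|_{Q_h}$, and passing from the squared identity to a bound on $\|\vec q_h\|_{Q_h}$ exactly as in the preceding step yields~\ref{IT:stab} with $C_S=2C_B$. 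The main obstacle is bookkeeping: verifying that, upon differentiation, the three-field coupling terms telescope correctly so that only the quadratic $a$-weighted volume term and the $\tau$-jump term survive on the left. Once this cancellation is in hand, the remaining steps are just Cauchy--Schwarz together with \eqref{EQ:stab_u}, paralleling the mixed-method proof.
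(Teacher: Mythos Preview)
Your argument is correct and follows the same route as the paper: test (the differentiated) system \eqref{EQ:hdg_scheme} against the triple $(\der\vec q_h,\der u_h,-\der m_h)$, observe that the coupling terms cancel and leave the quadratic energy identity, and conclude via Cauchy--Schwarz and \eqref{EQ:stab_u}. The only difference is bookkeeping: because the $Q_h$-norm in \eqref{EQ:hdg_norm} is written as the \emph{sum} $A+B$ rather than $(A^2+B^2)^{1/2}$, you carefully extract $A\le X$ and $B\le X$ separately from $A^2+B^2\le XA$ and arrive at $C_R=2$, $C_S=2C_B$; the paper's proof of \Cref{LEM:stability_hdg} writes $\|\vec q_h\|_{Q_h}^2=F(u_h)$ directly and reports $C_R=1$, $C_S=C_B$, which is literally correct only under the $\ell^2$-reading of \eqref{EQ:hdg_norm}.
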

\begin{proof}
 The validity of \Cref{IT:norm} follows directly from \eqref{EQ:hdg_norm} with \( C_E = 1 \), and \Cref{IT:recur} is shown with \( C_R = 1 \) in \Cref{LEM:hdg_parametric}. Finally, \Cref{IT:stab} is demonstrated in \Cref{LEM:stability_hdg} with \( C_S = C_B \).
\end{proof}

\begin{lemma}\label{LEM:stability_hdg}
 If \eqref{EQ:stab_u} holds, the solution of the HDG method \eqref{EQ:hdg_scheme} satisfies
 \begin{equation*}
  \| \vec q_h(\vec y) \|_{\mathcal W_h} \le C_S \sqrt{\amax} \| f \|_\mesh.
 \end{equation*}
\end{lemma}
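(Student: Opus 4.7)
My plan is to mimic the proof of Lemma~\ref{LEM:mm_stab}, testing the three HDG equations with the discrete solution components and summing to produce a coercivity-like identity, then invoking~\eqref{EQ:stab_u} to close the argument.

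First, I would test~\eqref{EQ:hdg_primal} with $\vec r_h \leftarrow \vec q_h(\vec y)$, the second equation of~\eqref{EQ:hdg_scheme} with $v_h \leftarrow u_h(\vec y)$, and the conservation condition~\eqref{EQ:mass_conservation} with $\mu_h \leftarrow m_h(\vec y)$, noting that $G \equiv 0$ and $F(v_h) = (f,v_h)_\mesh$. Adding the first two and subtracting the third, the volume terms $\pm(u_h,\nabla\cdot\vec q_h)_\mesh$ cancel, and the skeleton terms regroup as
\begin{equation*}
  \langle m_h,\vec q_h\cdot\normal\rangle_{\partial\mesh}
  + \langle u_h-m_h,\tau u_h\rangle_{\partial\mesh}
  - \langle \vec q_h\cdot\normal + \tau(u_h-m_h),m_h\rangle_{\partial\mesh}
  = \langle \tau(u_h-m_h),u_h-m_h\rangle_{\partial\mesh}.
\end{equation*}
This yields the fundamental energy identity
\begin{equation*}
  (a(\vec y)\vec q_h,\vec q_h)_\mesh + \|\sqrt\tau(u_h-m_h)\|_{\partial\mesh}^2 = (f,u_h)_\mesh.
\end{equation*}

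Next, applying Cauchy--Schwarz on the right gives $(f,u_h)_\mesh \le \|f\|_\mesh\|u_h\|_\mesh$, and the assumed bound~\eqref{EQ:stab_u} produces $\|u_h\|_\mesh \le C_B\sqrt{\amax}\,\|\vec q_h\|_{Q_h}$. On the left, the two nonnegative contributions equal $\|\sqrt{a(\vec y)}\vec q_h\|_\mesh^2 + \|\sqrt\tau(u_h-m_h)\|_{\partial\mesh}^2$, which, since the $Q_h$-norm in~\eqref{EQ:hdg_norm} is the sum (not the Pythagorean combination) of these two seminorms, satisfies $\|\sqrt{a(\vec y)}\vec q_h\|_\mesh^2 + \|\sqrt\tau(u_h-m_h)\|_{\partial\mesh}^2 \ge \tfrac12\|\vec q_h\|_{Q_h}^2$ by the elementary inequality $a^2+b^2\ge\tfrac12(a+b)^2$. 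Combining the three estimates gives
\begin{equation*}
  \tfrac12 \|\vec q_h\|_{Q_h}^2 \le C_B\sqrt{\amax}\,\|f\|_\mesh\,\|\vec q_h\|_{Q_h},
\end{equation*}
and dividing through delivers the claimed bound (with an inessential factor of $2$ absorbed into $C_S$).

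The only step I expect to require care is verifying the cancellation of the skeleton terms in the energy identity, since the HDG scheme couples volume and trace unknowns through three different equations and the signs must be tracked precisely. Once that identity is in hand, the remainder is a direct chain of Cauchy--Schwarz, assumption~\eqref{EQ:stab_u}, and the norm equivalence built into~\eqref{EQ:hdg_norm}.
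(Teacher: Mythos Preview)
Your proposal is correct and follows essentially the same approach as the paper: test the three HDG equations with $(\vec q_h,u_h,-m_h)$ (equivalently, subtract the third after testing with $m_h$), obtain the energy identity, and close with Cauchy--Schwarz together with~\eqref{EQ:stab_u}. Your tracking of the skeleton cancellations is accurate.

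One point worth noting: you are in fact more careful than the paper on the norm issue. The paper writes the energy identity directly as $\|\vec q_h\|_{Q_h}^2 = F(u_h)$, implicitly treating $\|\cdot\|_{Q_h}$ as if it were defined by the sum of squares, and records $C_S=C_B$. Since~\eqref{EQ:hdg_norm} defines $\|\cdot\|_{Q_h}$ as the \emph{sum} of the two seminorms, your insertion of the elementary inequality $a^2+b^2\ge\tfrac12(a+b)^2$ is the rigorous step, at the cost of the harmless extra factor of~$2$ in $C_S$.
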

\begin{proof}
 Setting $\vec r_h \leftarrow \vec q_h(\vec y)$, $v_h \leftarrow u_h(\vec y)$, $\mu_h \leftarrow -m_h(\vec y)$ and summing the equations in \eqref{EQ:hdg_scheme} yields
 \[
  \| \vec q_h(\vec y) \|^2_{\mathcal W_h} = F(u_h(\vec y)) \le \| f \|_\mesh \| u_h (\vec y) \|_\mesh \lesssim \| f \|_\mesh \sqrt{\amax} \| \vec q_h(\vec y) \|_{\mathcal W_h}
 \]
 where the hidden constant in `\( \lesssim \)' is \( C_S = C_B \).
\end{proof}

\begin{lemma}\label{LEM:hdg_parametric}
 We have the parametric regularity bound
 \begin{equation*}
  \| \der \vec q_h(\vec y) \|_{\mathcal W_h} \le \sum_{\vec 0 \neq \vec m \le \vec \nu} \begin{pmatrix} \vec \nu \\ \vec m \end{pmatrix} \left\| \frac{\derm a(\vec y)}{a(\vec y)} \right\|_{L^\infty(D)} \| \dernum \vec q_h(\vec y) \|_{\mathcal W_h}.
 \end{equation*}
\end{lemma}
\begin{proof}
 Differentiating \eqref{EQ:hdg_scheme} with respect to the parameter/stochastic variable $\vec y$ yields
 \begin{subequations}\label{EQ:der_hdg}
 \begin{align}
  (a(\vec y) \derm \vec q_h(\vec y), \vec r_h )_\mesh - ( \derm u_h(\vec y), \nabla \cdot \vec r_h )_\mesh + \langle \derm m_h (\vec y), \vec r_h \cdot \normal \rangle_{\partial \mesh} & = G(\vec r_h), \\
  (\nabla \cdot \derm \vec q_h(\vec y), v_h)_\mesh + \langle \derm u_h(\vec y) - \derm m_h(\vec y), \tau v_h \rangle_{\partial \mesh} & = F(v_h),\\
  \langle \derm \vec q_h(\vec y) \cdot \normal + \tau ( \derm u_h(\vec y) - \derm m_h(\vec y) ), \mu_h \rangle_{\partial \mesh} & = 0.
  \end{align}
 \end{subequations}
 Clearly, \eqref{EQ:der_hdg} has the same form as \eqref{EQ:hdg_scheme}, but we have that $F(v_h) = 0$ and $G(\vec r_h)$ as in \eqref{EQ:der_g}. Combining the techniques in the proofs of \Cref{LEM:mm_parametric,LEM:stability_hdg} yields the result.
\end{proof}

\begin{remark}[Hybrid high-order methods]
 The above considerations can also be extended to hybrid-high-order methods, for which the stabilizing boundary integral (the one involving \( \tau \)) is replaced by an abstract (stabilizing) bilinear form and \eqref{EQ:stab_u} follows from \cite[(HM1), (HM3) or (HM6)]{DiPietroDKMR24}.
\end{remark}

\subsection{Continuous finite elements}
%
Continuous (also classic, conforming, or standard) finite elements approximate the solution by $u_h \in U_h$ \corr{(where \( U_h  \subset H^1(D) \) is an element-wise polynomial finite element space)} with
\begin{equation}\label{EQ:fe_disc}
 \int_D a^{-1}(\vec y) \nabla u_h(\vec y) \cdot \nabla v_h \dx = \int_D f v_h \dx \qquad \text{ for all } v_h \in U_h.
\end{equation}

Setting \(\vec q_h = - a^{-1}(\vec y) \nabla u_h(\vec y) \in \mathcal W_h = L^2(\Omega) \) in the interior of all $\elem \in \mesh$ and \( \| \vec q_h(\vec y) \|_{\mathcal W_h} = \| \sqrt{a(\vec y)} \vec q_h(\vec y) \|_{L^2(D)} \) allows us to employ our general framework---note that in this case $\mathcal W_h$ is \emph{not} finite dimensional.

Here, \(C_S > 0\) is the Friedrichs--Poincar\'e constant for which the relation \( \| u_h \|_{L^2(D)} \le C_S \| \nabla u_h \|_{L^2(D)}\) holds, since
\[ \| \vec q_h(\vec y) \|^2_{\mathcal W_h} = \int_D f u_h(\vec y) \dx \le C_S \sqrt{\overline a(\vec y)} \| f \|_{L^2(D)} \| \vec q_h(\vec y) \|^2_{\mathcal W_h}. \]

The constant \(C_R = 1\), since differentiating \eqref{EQ:fe_disc} implies gives a zero on the right-hand side (\( v_h \) is independent of \( \vec y \)), and\\
\begin{align*}
 \| \der \vec q_h(\vec y) \|^2_{\mathcal W_h} & = - \sum_{\vec 0 \neq \vec m \le \vec \nu} \begin{pmatrix} \vec \nu \\ \vec m \end{pmatrix} ([\derm a(\vec y)] \dernum \vec q_h(\vec y), \der \vec q_h(\vec y) )_D \\
 & \le \sum_{\vec 0 \neq \vec m \le \vec \nu} \begin{pmatrix} \vec \nu \\ \vec m \end{pmatrix} \left\| \frac{\derm a(\vec y)}{a(\vec y)} \right\|_{L^\infty(D)} \| \dernum \vec q_h(\bsy) \|_{\mathcal W_h} \| \der \vec q_h(\vec y) \|_{\mathcal W_h}.
\end{align*}

\section{Numerical experiments}\label{SEC:numerics}
%
We consider equation \eqref{EQ:base_pde} with $f(\vec x) = x_1$ in $D = (0,1)^2$ and investigate the errors in the means of the numerical approximations to the unknown $u(\vec y$, its gradient $\nabla u(\vec y) = a(\vec y) \vec q(\vec y)$, and its flux $\vec q(\vec y)$ in the subdomain $\tilde D = (0.2,0.8)^2$.

For the affine case, we set $U = [-\tfrac{1}{2},\frac{1}{2}]^{\mathbb N}$ and truncate the series expansion for the input random coefficient into $s=100$ terms, i.e.,
\begin{equation*}
 a_\textup{affine}(\vec x,\vec y) = 5 + \sum_{j=1}^{100} \frac{\xi(y_j)}{(k_j^2 + \ell_j^2)^{1.3}} \sin(k_j \pi x_1) \sin(\ell_j \pi x_2),
\end{equation*}
where $(k_j, \ell_j)_{j \ge 1}$ is an ordering of elements of $\mathbb Z_+ \times \mathbb Z_+$ such that the sequence $(\|\psi_j\|_{L^\infty(D)})_{j \ge 1}$ is not increasing. For the affine Gevrey-$\sigma$ case, we set $\sigma = 1.25$ and choose mapping $\xi$ as 
\begin{equation*}
 \xi(t) = \exp \left( \frac{-1}{(t + 0.5)^\omega} \right), \qquad \text{ with } \qquad \omega =\frac{1}{\sigma - 1} \iff \sigma=1+\frac{1}{\omega},
\end{equation*}
while $\xi$ is the identity in the (classical) affine case.

In the lognormal case, we define $U = \mathbb R^{\mathbb N}$ and consider the dimensionally truncated coefficient with $s=100$ terms
\begin{equation*}
 a_\textup{lognormal}(\vec x, \vec y) = \exp\left( \sum_{j=1}^{100} \frac{\xi(y_j)}{(k_j^2 + \ell_j^2)^{1.3}} \sin(k_j \pi x_1) \sin(\ell_j \pi x_2) \right)
\end{equation*}
with an analogously defined sequence $(k_j, \ell_j)_{j \ge 1}$. In this case, we have that $\|\psi_j\|_{L^\infty(D)}\sim j^{-1.3}$ and the expected convergence independently of the dimension is $\mathcal O(n^{-0.8+\varepsilon})$, $\varepsilon>0$. Again, $\xi$ is the identity in the (classical) lognormal case, and for the lognormal Gevrey case, we choose
\[ \xi(t)= \operatorname{sign}(t)\exp(-t^{-\omega}) \qquad \text{ with } \qquad \omega =\frac{1}{\sigma - 1} \]
(cf., e.g.,~\cite[p.~16]{qi1996general}) and the convention that $\operatorname{sign}(0) = 0$.

We use the off-the-shelf generating vector~\cite[lattice-32001-1024-1048576.3600]{lattice} with a total amount of $2^m$, $m = 2, \dots, 14$  cubature points with $R=16$ random shifts. Although this generating vector has not been obtained using the CBC algorithm with the weights derived in \cite{GrahamKNSSS15} and Theorem \ref{thm:qmcweight2}, we found this off-the-shelf generating vector to perform well, yielding the \corr{theoretically expected} rate of convergence. Thus, this confirms our analytical findings without excluding that there might be even better lattice rules. In practice, the off-the-shelf lattice rule performs similarly to tailored lattice rules. The finite element discretization uses the lowest order Raviart--Thomas discretization on an unstructured simplicial mesh. The file that needs to be run to reproduce our results can be found in 
\cite{KaarniojaRG25}; it uses NGSolve \cite{Schoberl14}.

\begin{figure}[ht!]
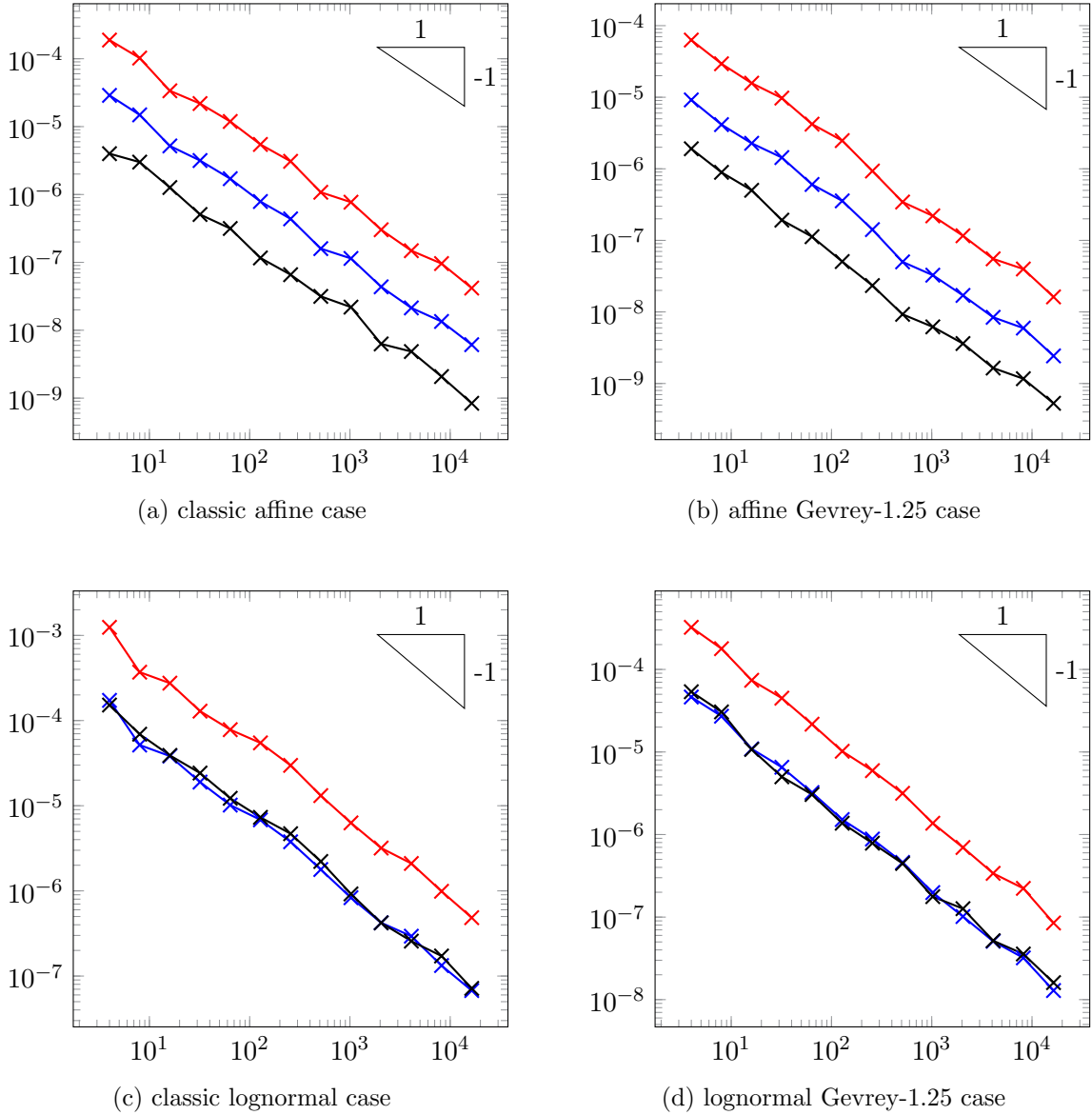
\centering
 \begin{subfigure}[t]{0.48\textwidth}\centering
  \includegraphics[width=.95\textwidth]{results/affine}
  \caption{classic affine case}
 \end{subfigure}\hfill
 \begin{subfigure}[t]{0.48\textwidth}\centering
  \includegraphics[width=.95\textwidth]{results/gevrey-affine}
  \caption{affine Gevrey-1.25 case}
 \end{subfigure}\\[3ex]
 \begin{subfigure}[t]{0.48\textwidth}\centering
  \includegraphics[width=.95\textwidth]{results/lognormal}
  \caption{classic lognormal case}
 \end{subfigure}\hfill
 \begin{subfigure}[t]{0.48\textwidth}\centering
  \includegraphics[width=.95\textwidth]{results/gevrey-lognormal}
  \caption{lognormal Gevrey-1.25 case}
 \end{subfigure}
 \caption{Root mean square error (ordinates) of the mean value of $u_h(\vec y)$ in blue, its gradient $-a(\vec y) \vec q_h(\vec y)$ (red), and its flux $\vec q_h(\vec y)$ (black). The abscissas illuminate the number of QMC quadrature points per random shift.}\label{FIG:numerics}
\end{figure}

In the (classic) affine (Figure \ref{FIG:numerics}, (A)), we observe that the QMC error for the gradient (red) is the largest and decays with a rate of 1.01, while the QMC error for the numerical flux is the smallest and decays with a rate of 1.02. The QMC error for the unknown $u_h(\vec y)$ lies in between and decays with a rate of 1.01. The affine Gevrey-$1.25$ case (Figure \ref{FIG:numerics}, (B)) behaves very similarly with decay rates of 1.00, 0.99, and 1.00 for the respective quantities.

For the lognormal case (Figure \ref{FIG:numerics}, (C)), we observe that the root mean square error for the primal unknown $u_h(\vec y)$ decreases with a rate of $0.90$ and the root mean square error for the gradient $-a(\vec y) \vec q_h(\vec y)$ decreases with a rate of $0.90$. Thus, these two quantities seem to decrease \corr{at the theoretically expected rate}. However, the convergence rate for the flux unknown $\vec q_h(\vec y)$ is slightly better and lies at $0.91$. We also see that the root mean square errors for the primal unknown and the flux are similar. In contrast, the respective error for the gradient is larger and highlights that considering only $2^i$ with $i \ge 8$ points yields convergence rates of 0.94 for $u_h(\vec y)$, 0.96 for $\nabla u_h(\vec y)$, and 0.98 for $\vec q_h(\vec y)$ indicating that our QMC error bounds are accurate for large sample sizes. The lognormal Gevrey-1.25 case (Figure \ref{FIG:numerics}, (D)) converges with orders 0.98 for gradient and primal unknown and with order 0.96 for the flux unknown.

\section{Conclusions}
%
In this work, we derived parametric regularity bounds for the diffusion equation with a parameter inversely proportional to Gevrey regular parametrizations of the input random field $a$. We observe that mixed and mixed hybrid methods are natural discretizations for such problems and that the parametric regularity estimates are mainly performed in terms of the flux unknown $\vec q_h(\vec y)$, implying that quantities of interest that directly rely on $\vec q_h$ should be approximated best while quantities of interest that rely on $u_h(\vec y)$ or $\nabla u_h(\vec y)$ have an additional continuity constant in the analysis, and should converge at the same rate but with larger constant. Our numerical experiments confirm this observation.

Hybrid and mixed methods substantially improve the QMC results since they correctly represent more aspects of physics and lay the foundation of the above general framework to analyze QMC finite element methods. In the future, we would like to use the additional information from the unknown flux to build adaptive QMC methods that spatially resolve the PDE problem in each quadrature node just as necessary.

A limitation of our framework is numerical methods that need a rich enough stabilization to approximate the quantity of interest stably, such as (interior penalty) discontinuous Galerkin or discontinuous Petrov--Galerkin methods. Such methods typically violate \Cref{IT:recur} and need individualized analysis techniques; see \cite{KaarniojaR24} for discontinuous Galerkin.

\begin{remark}[Possible generalizations]\
 \begin{enumerate}
  \item \textbf{More general equations.} A careful revision of the arguments in Sections \ref{SEC:qmc} and \ref{SEC:para_reg} reveals that \eqref{EQ:base_pde} is never used in the QMC analysis. Thus, we could conduct the same analysis if Assumptions \ref{IT:stab}--\ref{IT:gev} hold for any other (partial differential) equation.
  \item \textbf{Sparse grids.} The regularity bounds in Section \ref{SEC:para_reg} can also be used to analyze the convergence properties of sparse grid methods. Thus, our framework also covers sparse grids if one replaces Theorems \ref{THM:bounded} and \ref{thm:lognormalqmcerror} with their respective sparse grid versions.
  \item \textbf{Quadratic quantities of interest:} If $\|\partial_{\setu}\boldsymbol q_h\|_{L^2(D)}\leq C(|\setu|!)^{\sigma}\boldsymbol b_{\setu}$, then~\eqref{eq:quadraticfun} can be sharpened as
  \begin{align*}
   \bigg|\frac{\partial^{|\setu|}}{\partial \bsy_{\setu}}J(\boldsymbol q_h(\bsy))\bigg|&\leq C^2\boldsymbol b_{\setu}\sum_{\setv\subseteq\setu}(|\setv|!)^{\sigma}((|\setu|-|\setv|)!)^\sigma=C^2\boldsymbol b_{\setu}\sum_{\ell=0}^{|\setu|}(\ell!)^{\sigma}((|\setu|-\ell)!)^{\sigma}\\
   & \leq C^2\boldsymbol b_{\setu}\sum_{\ell=0}^{|\setu|}(|\setu|!)^{\sigma} =C^2\boldsymbol b_{\setu}(|\setu|!)^{\sigma}(|\setu|+1)\leq C^2\boldsymbol b_{\setu}((|\setu|+1)!)^{\sigma},
  \end{align*}
  where we used the fact that $\ell!(|\setu|-\ell)!\leq |\setu|!$ and especially $(\ell!(|\setu|-\ell)!)^\sigma\leq (|\setu|!)^\sigma$ for $\sigma\geq 1$. This estimate can be used to shorten and improve the estimates of \( t_{\setu} \) in the proofs of Theorems \ref{thm:thm1} and \ref{thm:qmcweight2}: Repeating the subsequent arguments in the respective proofs allows to obtain the convergence results with \( r = 1 \) instead of \(r = 2 \) for quadratic quantities of interest.
 \end{enumerate}
\end{remark}

\bibliographystyle{siamplain}
\bibliography{qmc_mixed}

\appendix

\section{More details} \label{sec:more-details}

\subsection{Proof of \eqref{eq:qmc-expectation}}
%
Using a translational change of variable in the definition~\eqref{EQ:eval_int} of $I_s^\vphi$, we have, for any 
fixed $k$,
\[
 I_s^{\varphi} = \int_{(0,1)^s}F(\Phi^{-1}(\boldsymbol t))\,{\rm d}\boldsymbol t  = \int_{(0,1)^s} F(\Phi^{-1}(\{\boldsymbol t_k+\boldsymbol\Delta_1\})) \; \textup d \boldsymbol\Delta_1.
\]
Averaging over~$k$, 
\[
 I_s^{\varphi} = \int_{(0,1)^s} \frac{1}{n} \sum_{k=1}^n F(\Phi^{-1}(\{\boldsymbol t_k+\boldsymbol\Delta_r\})) \; \textup d \boldsymbol\Delta_1 = \EE [ Q_1^\vphi],
\]
and we obtain the second equality of \eqref{eq:qmc-expectation} with $r=1$.  It follows for any~$r$ since $\bd \Delta_r$ are i.i.d.\ and $\EE [ Q_1^\vphi] = \EE [ Q_r^\vphi]$.  The first equality of~\eqref{eq:qmc-expectation} follows from
\begin{align*}
 \EE[ \Qmc ] & = \int_{(0,1)^s \times \dots \times (0,1)^s} \left( \frac 1 R \sum_{r=1}^R Q_r^\vphi \right) \; \textup d (\boldsymbol\Delta_1 \otimes \dots \otimes \boldsymbol\Delta_R) \\
 & = \frac 1 R    \sum_{r=1}^R \left( \int_{(0,1)^s} Q_r^\vphi \; \textup d \bd\Delta_r\right) \; \left(\prod_{r'\ne r} \int_{(0,1)^s}    \textup d \boldsymbol\Delta_{r'}\right) = \frac 1 R    \sum_{r=1}^R \EE[ Q_r^\vphi],  
\end{align*}
because $\int_{(0,1)^s} \textup d \bd \Delta_{r'} = 1$.  Since the values of $\EE[ Q_r^\vphi]$ are equal for all $r$, \eqref{eq:qmc-expectation} follows.

\subsection{Proof of \eqref{eq:qmc-variance}}
%
The variance in \eqref{eq:qmc-variance} equals
\begin{align*}
 \EE\big[\left(I_s^\varphi -\Qmc \right)^2\big] &=  R^{-2}\, \EE\Big[\Big( \sum_{r=1}^R (I_s^\varphi -  Q_r^\vphi) \Big)^2\Big] \\
 & = R^{-2} \sum_{r=1}^R \EE\big[( I_s^\varphi -  Q_r^\vphi)^2\big] + R^{-2} \sum_{r' \ne r} \EE\big[ ( I_s^\varphi -  Q_r^\vphi)( I_s^\varphi -  Q_{r'}^\vphi) \big] \\
 &= \frac 1 R  \EE\big[( I_s^\varphi -  Q_r^\vphi)^2\big] + \frac{1}{R^2}\sum_{r\ne r'}( I_s^\varphi)^2 -  2 I_s^\vphi \EE [Q_r^\vphi] + \EE[Q_r^\vphi Q_{r'}^\vphi], 
\end{align*}
where we have used equality of all summands of the first sum.  By \eqref{eq:qmc-expectation}, $I_s^\vphi \EE[ Q_r^\vphi ] = (I_s^\vphi)^2$, while by the independence of $\bd\Delta_r$, we have $\EE[Q_r^\vphi Q_{r'}^\vphi] = \EE[Q_r^\vphi]^2 = (I_s^\vphi)^2$. Hence the last term vanishes, thus proving \eqref{eq:qmc-variance}.


\end{document}